\documentclass[10pt]{article}
\usepackage[T2A]{fontenc}
\usepackage[utf8]{inputenc}
\usepackage{amsmath, amsthm}\usepackage{enumerate}
\usepackage{amssymb}\usepackage{bbold}
\usepackage{color}
\renewcommand{\subsection}{\@startsection{subsection}{1}{0pt}{3.25ex plus 1ex minus.2ex}{-1em}{\normalfont\normalsize\bf}}

\newtheorem{theorem}{Theorem}[section]
\newtheorem{proposition}{Proposition}[section]
\newtheorem{lemma}{Lemma}[section]
\newtheorem{example}{Example}[section]
\newtheorem{corollary}{Corollary}[section]
\newtheorem{assertion}{Assertion}[section]
\newtheorem{definition}{Definition}[section]
\DeclareMathOperator{\convr}{\xrightarrow[]{{\it ru}}}
\DeclareMathOperator{\convtau}{\xrightarrow[]{\tau}}
\DeclareMathOperator{\convtaur}{\xrightarrow[]{\tau_{{\it ru}}}}

\newcommand{\spa}{\hbox{\rm span}}

\newcommand{\card}{\hbox{\rm card}}
\renewcommand{\a}{\alpha}
\renewcommand{\b}{\beta}
\newcommand{\g}{\gamma}
\newcommand{\w}{\omega}

\large
\begin{document}

\date{\empty}
\title{{\bf Relative uniform convergence and Archimedean property in pre-ordered vector spaces and Abelian groups}}
\maketitle
\author{\centering{{I. A. Emelyanenkov$^{1}$, E. Yu. Emelyanov$^{1}$\\ 
\small $1$ Sobolev Institute of Mathematics, Novosibirsk, Russia}
\abstract{It is proved that, for a pre-ordered vector space $X$, the quotient space $(X/A,[W])$ is an Archimedeanization of $X$,
where $W$ is the closure of positive wedge $X_+$ in ru-topology, $A=W\cap-W$, and $[W]$ is the quotient set of $W$ in $X/A$.
A construction of an almost Archimedeanization of a pre-ordered vector space is given. Analogous results are presented 
for pre-ordered groups. We present an explicit example of a family of vector lattices for which a chain of spaces of infinitely small elements
stabilizes first at any chosen ordinal ${\a}\leqslant{\w}_1$.}

\vspace{3mm}
{\bf Keywords:} pre-ordered vector space, pre-ordered group, relative uniform convergence, Archimedeanization, almost Archimedeanization

\vspace{3mm}
{\bf MSC2020:} {\normalsize 06F15, 46A40, 47B60, 47B65}
}}

\section{Introduction and preliminaries}

\hspace{4mm}
Archimedean property is important in the functional repre\-sentation of various analytic and algebraic structures.
For example, Kadison's representation theorem \cite{Kadison-1951} tells that every ordered real vector space with an Archimedean
order unit is order isomorphic to a vector subspace of the space of continuous real valued functions on a compact Hausdorff space. 
Kadison's theorem inspired M.D. Choi and E.G. Effros to obtain an analogous representation theorem 
for self-adjoint subspaces of unital $C^\ast$-alge\-bras that contain the unit \cite{CE-1977}.
There is a plenty of recent works where the Archimedean property is studied in connection to various applica\-tions in algebra, analysis and quantum physics. 
For instance, in the breakthrough paper \cite{PT-2009}, V.I. Paulsen and M. Tomforde have developed a theory of ordered *-vector spaces.
In their theory, Archimed\-eanization of ordered vector spaces with order units plays a crucial role. 
The const\-ruc\-tion of Archimedeanization of ordered vector spaces with an order unit \cite{PT-2009} was extended to 
arbitrary ordered vector spaces in \cite{E-2017}. The proof of existence of an Archimedeanization given in \cite{E-2017} has a gap, 
that is fixed in the present paper. Let us also mention one more important direction in analysis 
based on represen\-tation theorems for Archimedian structu\-res, the const\-ruction of free spaces with additional analytic structures that 
was initiated by B. de Pagter and A.W. Wickstead in \cite{PW-2015}. In what follows, vector spaces are real. We recall several definition.

\begin{definition}\label{Definition-1_1}
A subset $A$ of a vector space {\em (}of a group{\em )} $X$ is {\color{blue}{generating}}, whenever $A-A=X$.
\end{definition}

\begin{definition}\label{Definition-1_2}
A non-empty subset $W$ of a vector space $X$ is a {\color{blue}{wedge}} whenever 
$W+W\subseteq W \ \ \text{\rm and} \ \ tW\subseteq W \ \ \text{\rm for} \ \ t\geqslant 0$.
A wedge $C$ is called a {\color{blue}{cone}}, if $C\cap -C=\{0\}$.
\end{definition}

\noindent
Under an {\color{blue}{pre-ordered vector space}} (briefly, {\color{blue}{POVS}})
we understand a vector space $X$ together with a wedge $W$. 
When $W$ is a cone, we say that $X=(X,W)$ is an {\color{blue}{ordered vector space}} (briefly, {\color{blue}{OVS}}).

\begin{definition}\label{Definition-1_3}
A subset $A$ of a POVS $(X,W)$ is {\color{blue}{majorizing}} if for each $x\in X$ there exists $a\in A$ such that $a-x\in W$.
\end{definition}

\noindent
It can be seen easily that the positive wedge $W$ of a POVS $X$ is majorizing iff $W$ is generating iff $\spa(W)=X$.

\begin{definition}\label{Definition-1_4}
Every POVS $(X,W)$ is equipped with a {\color{blue}{partial pre-order}}
$$
   x\leqslant_W y\ \ \text{\rm(or, \ simply}  \ \ x\leqslant y)\ \Longleftrightarrow\ y-x\in W.
$$ 
Each pair of vectors $\{a,b\}$ in a POVS $(X,W)$ produces a {\em (}possibly empty{\em)} {\color{blue}{order interval}}:
$$
   [a,b]_W=[a,b]:=\{x\in X: a\leqslant_W x\leqslant_W b\}=(a+W)\cap(b-W).
$$
\end{definition}

\begin{definition}\label{Definition-1_5}
A subset $A$ of a POVS $(X,W)$ is an {\color{blue}{order ideal}} in $(X,W)$, whenever $A$ is a {\color{blue}{vector subspace}} of $X$ such that 
$a,b\in A\Longrightarrow [a,b]\subseteq A$. 
\end{definition}

\noindent
If $A$ is an order ideal in a POVS $(X,W)$, the quotient space $X/A$ is pre-ordered by:
$$
   [0] \leqslant [x] \text{ if there exists}\ y \in A \text{ such that } 0 \leqslant_W x + y.
$$

\begin{definition}\label{Definition-1_6}
A wedge $W$ in a vector space $X$ {\em (}or, a POVS $(X,W)${\em )} is:
\begin{enumerate}[-]
\item\
{\color{blue}{almost Archimedean}} if $y=0$, whenever $\pm ny\leqslant_W u\in W$ for all $n\in\mathbb{N}$.
\item\
{\color{blue}{Archimedean}} if $y\leqslant_W 0$, whenever $ny\leqslant_W u\in W$ for all $n\in\mathbb{N}$.
\end{enumerate}
\end{definition}

\noindent
It is straightforward to see that every almost Archimedean wedge is a cone, every subcone of an almost Archimedean cone is almost Archimedean, and
every Archimedean cone is almost Archime\-dean. A wedge $W=X$ in a vector space $X$ is Archimedean yet not almost Archimedean unless $X=\{0\}$.
Even a two-dimensional almost Archimedean OVS need not to be Archimedean
(consider $\mathbb{R}^2$, ordered by a cone $C=\{(r_1,r_2):\ \text{\rm either}\ r_1=r_2=0, \ \text{\rm or}\ \min(r_1,r_2)>0\}$).
Furthermore, a POVS $(X,W)$ is almost Archime\-dean if and only if $W$ does not contain a straight line (see, e.g., Proposition \ref{Proposition-3_1}~$iv)$).

\medskip
It is convenient to denote the wedge $W$ of a POVS (X,W) by $X_+$, call it a {\color{blue}{positive wedge}} of $X$, 
and write $x\leqslant y$ instead of $x\leqslant_{X_+} y$.

\begin{definition}\label{Definition-1_7}
An operator $T: X\to Y$ between POVSs $X$ and $Y$ is {\color{blue}{positive}} {\em (}{\color{blue}{order bounded}}{\em )} if $T$ takes $X_+$ into $Y_+$
{\em (}order intervals of $X$ into order intervals of $Y${\em )}.
\end{definition}

\begin{definition}\label{Definition-1_8}
A net $(x_{\a})$ in a POVS $X$ 
{\color{blue}{ru-converges}} to $x\in X$ {\em (}shortly, $x_{\a}\convr x${\em )} 
if, for some {\em (}{\color{blue}{regulator of the convergence}}{\em )} $w\in X$, there is a sequence 
$({\a}_n)$ of indices with $\pm n(x_{\a}-x)\leqslant w$ for ${\a}\geqslant{\a}_n$.
\end{definition}

\noindent
Clearly, a regulator $w$ belongs to $X_+$. Whenever it is convenient we write $x_{\a}\convr x(w)$. 

\begin{definition}\label{Definition-1_9}
A subset $S$ of a POVS $X$ is {\color{blue}{ru-closed}} if, for every net $x_{\a}$ in $S$ and every $w\in X_+$ it follows from $x_{\a}\convr x(w)$ that $x\in S$.
\end{definition}

\noindent
The notion of ru-convergence was introduced by L.V. Kantorovich in \cite{Kantorovich-1936} as
an abstraction of the classical uniform conver\-gence of functions in $C[0,1]$.
It is straightforward to see that every ru-convergent net in an OVS $X$ has a unique ru-limit if and only if $X$ is almost Archime\-dean.
As the ru-con\-vergence is sequential, we can always restrict ourselves to ru-convergent sequences.

\medskip
The present paper is devoted to interrelation between the Archime\-dean property and relatively uniform conver\-gence in pre-ordered vector spaces.
In Theorem \ref{Theorem-2_1}, we prove that the quotient set $[W]$ of the $\tau_{ru}$-closure of positive wedge $X_+$ of a pre-ordered vector space $X$
is an Archime\-dean cone in quotient space $X/(W\cap -W)$. One of main results of our paper is Theorem \ref{Theorem-2_2} which establishes
that $(X/(W\cap -W),[W])$ is an Archimedeanization of $X$. Another line of research in our paper is concerned to an almost Archimedeanization of a POVS.
In Section 4 we present examples of OVSs and of vector lattices whose almost Archimedeanization type is an arbitrary ordinal
$\a$ such that $\a\leqslant\w_1$. Finally, in Section 5 contains some versions of results of Sections 2 and 3 for Abelian pre-ordered groups.

\smallskip
For further unexplained terminology and notations we refer to \cite{AT-2007,EEG-2025,KG-2019,KM-1994}.

\section{ru-Closedness of the positive wedge vs the Archi\-medean property}

\hspace{4mm}
This section is aimed on an investigation how the ru-closedness of a wedge interplay with the Archi\-medean property.
In the end of the section, we give a new construction of an Archimedeanization of a POVS  in Theorem \ref{Theorem-2_2}.

\smallskip
Let $X/A$ be a quotient space of a set $X$ by subset $A$ of $X$. A quotient set of $B\subseteq X$ is the set $[B]=\{[x]\in X/A: x\in B\}$
of all equivalence classes formed from $B$ in $X/A$. The following lemma is a standard fact.

\begin{lemma}\label{Lemma-2_1}
Let $W$ be a wedge in a vector space $X$ and $A=W\cap -W$. The following assertions hold.
\begin{enumerate}[$i)$]
\item\
$A$ is an order ideal in the POVS $(X,W)$.
\item\
$[W]$ is a cone in $X/A$.
\item\
$[W]$ is generating in $X/A$ if and only if $W$ is generating in $X$.
\end{enumerate}
\end{lemma}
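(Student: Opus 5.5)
The plan is a direct verification from the definitions in Section 1, taking the three items in order. The one point needing care is in $ii)$: we must show that the pre-order on $X/A$ from the paragraph after Definition \ref{Definition-1_5} has exactly $[W]$ as its positive wedge. Since $[W]$ is defined only as the set of classes of elements of $W$, we first check that membership in $[W]$ does not depend on the representative.

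For $i)$, $A=W\cap -W$ is nonempty because $0\in W$. It is closed under addition because both $W$ and $-W$ are. For $t\geqslant 0$ we have $tA\subseteq A$, and $-A=A$ by symmetry, so $A$ is a vector subspace. For the order-ideal property, let $a,b\in A$ and $x\in[a,b]$, so that $x-a\in W$ and $b-x\in W$. Then
$$x=(x-a)+a\in W+W\subseteq W,$$
because $a\in A\subseteq W$. Likewise
$$-x=(b-x)-b\in W+W\subseteq W,$$
because $-b\in A\subseteq W$. Hence $x\in W\cap -W=A$.

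For $ii)$, the first step is the invariance claim: if $x\in W$ and $y\in A$, then $x+y\in W$. This holds because $y\in W$ and $W+W\subseteq W$. Consequently $[x]\in[W]$ if and only if $x\in W$. This also shows that $[W]$ is exactly the positive set of the quotient pre-order: $[0]\leqslant[x]$ means $x+y\in W$ for some $y\in A$, which gives $[x]=[x+y]\in[W]$. Next, $[W]$ is a wedge, because the quotient map is linear and $W$ is a wedge. Finally, for the cone property, suppose $[x]\in[W]\cap-[W]$. By the invariance claim, $x\in W$ and $-x\in W$, so $x\in A$ and $[x]=[0]$.

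For $iii)$, write $q\colon X\to X/A$ for the quotient map, which is linear and surjective. Since $q(W)=[W]$, we get
$$q(W-W)=[W]-[W].$$
If $W-W=X$, then applying $q$ gives $[W]-[W]=X/A$. Conversely, suppose $[W]-[W]=X/A$ and let $x\in X$. Then $[x]=[w_1]-[w_2]$ for some $w_1,w_2\in W$, so $x=w_1-w_2+a$ with $a\in A$. Since $w_1+a\in W$ by the invariance claim, $x\in W-W$. The only real obstacle in the whole proof is this invariance claim, that adding an element of $A$ keeps one inside $W$; once it is in place, every remaining step is routine.
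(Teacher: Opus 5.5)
Your proof is correct and follows essentially the same route as the paper. The order-ideal check in $i)$ and both directions of $iii)$ match the paper's argument, and your ``invariance claim'' $W+A\subseteq W$ is exactly what the paper uses implicitly in $ii)$ when it writes $x\in x_1+A\subseteq W+W$ and $x\in -x_2+A\subseteq -W-W$; your extra check that $[W]$ is the positive set of the quotient pre-order is a harmless addition the paper leaves unstated.
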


\begin{proof}
$i)$\
A straightforward computation shows that $A$ is a subspace of $X$.
Let $a,b\in A$ and $a\leqslant_W x\leqslant_W b$ for some $x\in X$. Since $\pm a,\pm b\in W$ and $b-x,x-a\in W$,
then $x=a+x-a\in W$ and $-x=-b+b-x\in W$, and hence $x\in A$. Thus, $A$ is an order ideal in $(X,W)$.

\medskip
$ii)$\
Clearly, $[W]$ is a wedge in $X/A$. To prove $[W]$ is a cone it suffices to show $[W]\cap-[W]\subseteq A$.
Let $[x]\in[W]\cap-[W]$. Then $x\in x_1+A$ and $x\in -x_2+A$ for some $x_1,x_2\in W$.
It follows $x\in(W+W)\cap(-W-W)\subseteq W\cap -W=A$, and hence $[x]=0$. 

\medskip
$iii)$\
Suppose $W-W=X$ and let $[x]\in X/A$. Then, $x=y_1-y_2$ for some $y_1,y_2\in W$, and hence
$[x]=[y_1-y_2]=[y_1]-[y_2]\in[W]-[W]$. Thus, $[W]$ is generating in $X/A$.

\smallskip
Suppose $[W]-[W]=X/A$ and let $x\in X$. Take $y_1,y_2\in W$ with $[x]=[y_1]-[y_2]=[y_1-y_2]$. 
So, for some $z\in A$ we have $x+z=y_1-y_2$, and hence $x=y_1-(y_2+z)\in W-W$. Therefore, the wedge $W$ is generating in $X$.
\end{proof}

An OVS $X$ is a {\color{blue}{vector lattice}} if $|x|:=\sup\{x,-x\}$ exists for every $x\in X$. It is easy to see that
every almost Archimedean vector lattice is Archimedean. An order ideal $A$ in a vector lattice $X$ will be referred to as a {\color{blue}{lattice ideal}}
whenever it  satisfies the property: $a\in A\Longrightarrow|a|\in A$.

\smallskip
W.A.J. Luxemburg and L.C-Jr. Moore proved in \cite{LM-1967} that the ru-closed subsets of a vector lattice $X$ are exactly the closed sets 
of the so-called ru-topology on $X$. Such a topology can be also introduced on an arbitrary POVS \cite{KG-2019}.

\begin{definition}\label{Definition-2_1}
The {\color{blue}{ru-topology}}, denoted by $\tau_{ru}$, on a POVS $X$ is determined as follows: a subset $S$ of $X$
is $\tau_{ru}$-closed whenever $S\ni x_n\convr x$ implies $x\in S$. The {\color{blue}{$\tau_{ru}$-closure}} of a subset $S$ of $X$ is denoted by $\overline{S}_{ru}$.
\end{definition}

It follows immediately from Definition \ref{Definition-2_1} that the ru-topology on a POVS $X$ is the strongest topology
$\tau$ on $X$ with the property $x_n\convr x\Longrightarrow x_n\convtau x$. Clearly, $\tau_{ru}$ is translation invariant.
In general, the topology $\tau_{ru}$ is not linear (e.g., on each POVS with a non-generating positive wedge).

It is useful to describe more constructi\-vely the $\tau_{ru}$-closure of a subset of a POVS.
The key idea is coming from \cite{LM-1967}, where the pseudo uniform closure of a subset of a vector lattice was studied.
Adopting the idea to the POVS setting, for a subset $S$ of a POVS $X$ we denote $S^{(0)}_{ru}:=S$;
$S^{(1)}_{ru}:=S'_{ru}$ the set of all $x\in X$ such that $x_n\convr x$ for some sequence $(x_n)_n$ in $S$;
$S^{({\a}+1)}_{ru}:=\bigl(S^{({\a})}_{ru}\bigl)'_{ru}$; and
$S^{({\a})}_{ru}:=\bigcup\limits_{\b\in{\a}}S^{(\b)}_{ru}$ when ${\a}$ is limit ordinal.

\begin{lemma}\label{Lemma-2_2}
Let $S$ be a subset of a POVS $X$. The following assertions hold.
\begin{enumerate}[$i)$]
\item\
$S^{({\a}_1)}_{ru}\subseteq S^{({\a}_2)}_{ru}$, whenever ${\a}_1\leqslant{\a}_2$.
\item\
$\overline{S}_{ru}=\bigcup\limits_{{\a}\in{\w}_1}S^{({\a})}_{ru}$, where ${\w}_1$ is the first uncountable ordinal.
\item\
$S\subseteq X_+-X_+\Longleftrightarrow\overline{S}_{ru}\subseteq X_+-X_+$.
\end{enumerate}
\end{lemma}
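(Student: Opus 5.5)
Part $i)$ will go by transfinite induction on ${\a}_2$, with ${\a}_1$ fixed. The base step is the inclusion $S\subseteq S'_{ru}$, which holds for any set $T$ because every $x\in T$ is the ru-limit of the constant sequence $x_n=x$ with regulator $w=0$. This gives $T\subseteq T'_{ru}$. Hence $S^{(\a)}_{ru}\subseteq S^{(\a+1)}_{ru}$ for every ${\a}$, and at limit ordinals the inclusion is built into the definition as a union. So the chain is increasing.

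For $ii)$, put $U:=\bigcup_{{\a}\in{\w}_1}S^{({\a})}_{ru}$. First I will show $U\subseteq\overline{S}_{ru}$. By induction, each $S^{({\a})}_{ru}$ is contained in every $\tau_{ru}$-closed set $F\supseteq S$: if $S^{({\a})}_{ru}\subseteq F$ and $x$ is an ru-limit of a sequence in $S^{({\a})}_{ru}$, then $x\in F$ because $F$ is closed, and limit steps are unions.

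Conversely, I will show that $U$ is $\tau_{ru}$-closed. Let $x_n\in U$ with $x_n\convr x$, and pick ${\a}_n<{\w}_1$ with $x_n\in S^{({\a}_n)}_{ru}$. Since ${\w}_1$ has uncountable cofinality, ${\b}:=\sup_n{\a}_n<{\w}_1$. By $i)$, all $x_n$ lie in $S^{({\b})}_{ru}$, so $x\in S^{({\b}+1)}_{ru}\subseteq U$. This shows $U$ is closed and contains $S$, so $\overline{S}_{ru}\subseteq U$. The key point here is the regularity of ${\w}_1$, which is what makes countably many indices bounded; the argument uses only sequences, consistent with the definition of $\tau_{ru}$ through sequences.

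For $iii)$, the implication ``$\Leftarrow$'' is immediate from $S\subseteq\overline{S}_{ru}$. For ``$\Rightarrow$'', the main step is that $Y:=X_+-X_+$ is ru-closed; then $\overline{S}_{ru}\subseteq Y$, since the closure is the smallest closed set containing $S$. Let $x_n\in Y$ with $x_n\convr x(w)$. Then for some $n$ (take $n=1$ at a large enough index) we have $w-(x_n-x)\in X_+$, so
$$x=x_n-w+\bigl(w-(x_n-x)\bigr)\in Y-X_++X_+\subseteq Y,$$
because $w\in X_+$ and $Y$ is a subspace containing $X_+$. I expect this last step to be the only nontrivial observation, and it reduces to this one-line computation.
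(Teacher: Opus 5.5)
Your proposal is correct and follows essentially the same route as the paper. Parts $i)$ and $ii)$ are the routine transfinite induction and the regularity-of-${\w}_1$ argument that the paper calls straightforward or defers to Luxemburg--Moore, and in $iii)$ your proof that $X_+-X_+$ is closed under ru-limits uses only the $n=1$ instance of $\pm n(x_n-x)\leqslant w$, which is the same computation as the paper's reduction to $S'_{ru}\subseteq X_+-X_+$ (the paper uses $x-x_1\leqslant w$ where you use $x_n-x\leqslant w$; either works since $X_+-X_+$ is a subspace containing $X_+$).
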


\begin{proof}
$i)$\
It is straightforward.

\medskip
$ii)$\
It is similar to the proof of \cite[Theorem 3.3]{LM-1967}, where the vector lattice case is considered.

\medskip
$iii)$\
It suffices to prove $S\subseteq X_+-X_+\Longrightarrow S'_{ru}\subseteq X_+-X_+$. So, let $S\subseteq X_+-X_+$
and $S\ni x_n\convr x$. By passing to a subsequence, if necessary, we may assume that 
$\pm n(x_n-x)\leqslant w$ for some $w\in X_+$ and all $n\in\mathbb{N}$. 
In particular, $x-x_1\leqslant w$, or $x\leqslant w+x_1$. Since $x_1=x_1^1-x_1^2$ for some $x_1^1,x_1^2\in X_+$, we obtain $x\leqslant w+x_1^1$ and 
$x=(w+x_1^1)-(w+x_1^1-x)\in X_+-X_+$.
\end{proof}

\begin{lemma}\label{Lemma-2_3}
Let $W$ be a wedge in a POVS $X$. The following assertions hold.
\begin{enumerate}[$i)$]
\item\
$W^{({\a})}_{ru}$ is a wedge in $X$ for every ordinal ${\a}$.
\item\
$\overline{W}_{ru}=W^{({\w}_1)}_{ru}$ is a wedge in $X$.
\end{enumerate}
\end{lemma}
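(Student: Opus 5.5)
We prove $i)$ by transfinite induction on ${\a}$, and then obtain $ii)$ from $i)$ together with Lemma \ref{Lemma-2_2}~$ii)$. Throughout, ru-convergence is taken with respect to the positive wedge $X_+$ of the ambient POVS; $W$ is just a wedge in $X$ whose iterated ru-closures we examine. The base case ${\a}=0$ is trivial, since $W^{(0)}_{ru}=W$.

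\emph{Successor step.} The key claim is that $V'_{ru}$ is a wedge whenever $V$ is a wedge. First, $V'_{ru}$ is non-empty: the constant sequence $v_n=v\in V$ ru-converges to $v$ with any regulator $w\in X_+$ (for instance $w=0$), so $V\subseteq V'_{ru}$.

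Next we check closure under addition. Let $V\ni x_n\convr x(w_1)$ and $V\ni y_n\convr y(w_2)$. After passing to subsequences we may assume
$$
\pm n(x_n-x)\leqslant w_1 \quad\text{and}\quad \pm n(y_n-y)\leqslant w_2 \quad\text{for all } n.
$$
Adding the two inequalities gives $\pm n\bigl((x_n+y_n)-(x+y)\bigr)\leqslant w_1+w_2$. Since $x_n+y_n\in V$, this shows $x+y\in V'_{ru}$.

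For scalar multiplication, let $t\geqslant 0$. Multiplying the first inequality by $t$ gives $\pm n(tx_n-tx)\leqslant tw_1$, with $tw_1\in X_+$. Since $tx_n\in V$, we get $tx\in V'_{ru}$.

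The only point needing care is the reduction to subsequences. The definition of ru-convergence provides indices $n_k$ with $\pm k(x_m-x)\leqslant w$ for $m\geqslant n_k$. Extracting the subsequence $(x_{n_k})_k$ gives the uniform form $\pm k(x_{n_k}-x)\leqslant w$, and the limit is unchanged.

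\emph{Limit step.} Let ${\a}$ be a limit ordinal. By Lemma \ref{Lemma-2_2}~$i)$, the family $\{W^{(\b)}_{ru}\}_{\b\in{\a}}$ is increasing, and each member is a wedge by the induction hypothesis. Given $x\in W^{(\b_1)}_{ru}$ and $y\in W^{(\b_2)}_{ru}$, both lie in $W^{(\max(\b_1,\b_2))}_{ru}$. Hence $x+y$ and $tx$ (for $t\geqslant 0$) belong to that same set. So an increasing union of wedges is a wedge, which completes the induction.

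For $ii)$, Lemma \ref{Lemma-2_2}~$ii)$ gives $\overline{W}_{ru}=\bigcup_{{\a}\in{\w}_1}W^{({\a})}_{ru}$. Since ${\w}_1$ is a limit ordinal, this union is by definition $W^{({\w}_1)}_{ru}$. It is a wedge by $i)$, applied with ${\a}={\w}_1$ (equivalently, by the same increasing-union argument). The main obstacle, such as it is, lies in the successor step: one must handle the regulators correctly, taking the sum $w_1+w_2$ as the common regulator and synchronizing the two sequences via subsequences.
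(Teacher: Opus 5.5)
Your proof is correct and follows essentially the same route as the paper. Both use transfinite induction. In the successor step you pass to subsequences satisfying $\pm n(x_n-x)\leqslant w$ and use the regulators $tw$ and $w_1+w_2$, the limit step is an increasing union, and $ii)$ follows from $i)$ together with Lemma \ref{Lemma-2_2}~$ii)$. You are slightly more explicit than the paper in checking non-emptiness and in using Lemma \ref{Lemma-2_2}~$i)$ to place two elements in a common $W^{(\b)}_{ru}$ at limit stages, which the paper leaves to ``similar''.
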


\begin{proof}
$i)$\
Let ${\a}$ be an ordinal. 
We have to prove that $t\cdot W^{({\a})}_{ru}\subseteq W^{({\a})}_{ru}$ for all $t\in\mathbb{R}_+$, and that
$W^{({\a})}_{ru}+W^{({\a})}_{ru}\subseteq W^{({\a})}_{ru}$.

\smallskip
By definition, $W^{(0)}_{ru}=W$, so $W^{(0)}_{ru}$ is a wedge. 
Suppose $W^{(\b)}_{ru}$ is a wedge for every $\b<{\a}$.
First, consider the case when ${\a}=\b+1$ for some $\b$.
So, $W^{(\b)}_{ru}$ is a wedge and $W^{({\a})}_{ru}=\bigl(W^{(\b)}_{ru}\bigl)'_{ru}$.
Let $t\geqslant 0$ and $x\in W^{({\a})}_{ru}$. Find a sequence $(x_n)$ in $W^{(\b)}_{ru}$ with $x_n\convr x$.
By passing to a subsequence, we may assume that $\pm n(x_n-x)\leqslant w$ for some $w\in X_+$ and all $n\in\mathbb{N}$.
Then, $\pm n(tx_n-tx)\leqslant tw$ for all $n\in\mathbb{N}$. We conclude $W^{(\b)}_{ru}\ni tx_n\convr tx$, and hence 
$tx\in\bigl(W^{(\b)}_{ru}\bigl)'_{ru}=W^{({\a})}_{ru}$.
Now, let ${\a}$ be a limit ordinal, $t\geqslant 0$, and $x\in W^{({\a})}_{ru}=\bigcup\limits_{\b\in{\a}}W^{(\b)}_{ru}$.
Then, $x\in W^{(\b)}_{ru}$ for some $\b<{\a}$. By the assumption, $tx\in tW^{(\b)}_{ru}\subseteq W^{(\b)}_{ru}\subseteq W^{({\a})}_{ru}$.
Again, we conclude $tx\in W^{({\a})}_{ru}$.

\smallskip
The proof of $W^{({\a})}_{ru}+W^{({\a})}_{ru}\subseteq W^{({\a})}_{ru}$ is similar, we point out only the case of 
${\a}=\b+1$. So, let $x,y\in W^{(\b+1)}_{ru}$, say $W^{(\b)}_{ru}\ni x_n\convr x$ and $W^{(\b)}_{ru}\ni y_n\convr y$.
By passing to subsequences, we may assume $\pm n(x_n-x)\leqslant w$ and $\pm n(y_n-y)\leqslant v$ for some $w,v\in X_+$ and all $n\in\mathbb{N}$.
Then, $\pm n((x_n+y_n)-(x+y))\leqslant (w+v)$ for all $n\in\mathbb{N}$. As $W^{(\b)}_{ru}$ is a wedge, 
we conclude $W^{(\b)}_{ru}\ni (x_n+y_n)\convr (x+y)$, and hence $x+y\in\bigl(W^{(\b)}_{ru}\bigl)'_{ru}=W^{(\b+1)}_{ru}$.

\medskip
$ii)$\
It follows from $i)$ due to Lemma \ref{Lemma-2_2}~$ii)$.
\end{proof}

It is proved in \cite[Theorem 4.2]{LM-1967} that every lattice homomorphism between vector lattices is continuous
in the ru-topology. Indeed, it is true for an arbitrary positive linear operator between POVSs and, if additionally the positive wedge in the co-domain is generating, 
it is also true for every order bounded linear operator.

\begin{lemma}\label{Lemma-2_4}
Let $T:X\to Y$ a linear operator between POVSs. Then $T$ is $\tau_{ru}$-continuous in each of the following cases.
\begin{enumerate}[$i)$]
\item\
$T$ is positive.
\item\
The positive wedge $Y_+$ is generating and $T$ is order bounded.
\end{enumerate}
\end{lemma}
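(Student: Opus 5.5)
Since $\tau_{ru}$ is defined by ru-convergent sequences (Definition \ref{Definition-2_1}), I would reduce continuity to a sequential statement. It suffices to show that $x_n\convr x$ in $X$ implies $Tx_n\convr Tx$ in $Y$. Indeed, if $F\subseteq Y$ is $\tau_{ru}$-closed, take $T^{-1}(F)\ni x_n\convr x$. Then $F\ni Tx_n\convr Tx$, so $Tx\in F$, and $T^{-1}(F)$ is $\tau_{ru}$-closed. So in both cases the plan is to push regulators forward through $T$, using linearity.

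$i)$ Let $x_n\convr x(w)$, so for each $k$ there is $n_k$ with $\pm k(x_n-x)\leqslant w$ for $n\geqslant n_k$, i.e. $w\mp k(x_n-x)\in X_+$. Since $T$ is positive and linear, $Tw\mp k(Tx_n-Tx)\in Y_+$. Hence $\pm k(Tx_n-Tx)\leqslant Tw$ for $n\geqslant n_k$, so $Tx_n\convr Tx(Tw)$. This case is immediate.

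$ii)$ Here $T$ need not preserve positivity, so I would replace $Tw$ by an upper bound of $T[-w,w]$. The condition $\pm k(x_n-x)\leqslant w$ says exactly that $k(x_n-x)\in[-w,w]$. Since $T$ is order bounded, $T[-w,w]\subseteq[a,b]$ for some $a,b\in Y$. Hence $a\leqslant kT(x_n-x)\leqslant b$, and by symmetry of $[-w,w]$ also $a\leqslant -kT(x_n-x)\leqslant b$. Both give $\pm kT(x_n-x)\leqslant b$, so $b$ is a candidate regulator.

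The remaining point is that a regulator should lie in $Y_+$; this is where the hypothesis that $Y_+$ is generating is used. Since $0\in[-w,w]$, we get $a\leqslant 0\leqslant b$, so in fact $b\in Y_+$ already. The generating hypothesis can still be invoked for robustness: write $b=b_1-b_2$ with $b_i\in Y_+$, which gives $b\leqslant b_1$, so $b_1$ can serve as a regulator in any case. Thus $Tx_n\convr Tx(b)$.

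The only real obstacle is the choice of regulator in $ii)$. I expect it to be resolved by the observation that $0\in[-w,w]$, together with the symmetry of the interval, which yields two-sided bounds from the single upper bound $b$.
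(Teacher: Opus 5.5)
Your proof is correct. The reduction to sequential ru-continuity and part $i)$ are the same as in the paper; the paper passes to a subsequence with $\pm n(x_n-x)\leqslant u$ for all $n$, while you keep the indices $n_k$, which makes no difference.

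Part $ii)$ follows a different route. The paper does not argue directly: it cites \cite{EEG-2025} (Theorem 2.4~ii) there) for the ru-continuity of order bounded operators into a POVS with generating positive wedge, and then repeats the closedness argument. You prove ru-continuity by hand. The condition $\pm k(x_n-x)\leqslant w$ puts both $k(x_n-x)$ and $-k(x_n-x)$ into the symmetric interval $[-w,w]$, so any interval $[a,b]\supseteq T[-w,w]$ gives $\pm kT(x_n-x)\leqslant b$. Thus $b$ serves as a regulator, and $b\in Y_+$ automatically.

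This makes the lemma self-contained, and it proves more than is stated. Your argument never uses that $Y_+$ is generating, so your ``robustness'' step with $b=b_1-b_2$ is redundant. Under Definition~\ref{Definition-1_7}, every order bounded linear operator between POVSs is therefore ru-continuous and hence $\tau_{ru}$-continuous. The paper's citation buys only brevity; your route is elementary and strictly more general.
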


\begin{proof}
Let $S\subseteq Y$ be $\tau_{ru}$-closed. In order to show $T$ is $\tau_{ru}$-continuous, it suffices to prove that $T^{-1}(S)$ is $\tau_{ru}$-closed.
To this end, let $T^{-1}(S)\ni x_n\convr x$.  By passing to a subsequence, we may suppose 
$\pm n(x_n-x)\leqslant u$ for some $u\in X_+$ and all $n\in\mathbb{N}$. By definition of $\tau_{ru}$-topology,
it remains to show $x\in T^{-1}(S)$.

\smallskip
$i)$\
Let $T\geqslant 0$. Then $\pm n(Tx_n-Tx)\leqslant Tu$ for all $n\in\mathbb{N}$, and hence $Tx_n\convr Tx$. Thus,
$S\ni Tx_n\convtaur Tx$ and, since $S$ is $\tau_{ru}$-closed, we infer $Tx\in S$, or $x\in T^{-1}(S)$.

\smallskip
$ii)$\
Let $Y_+$ be generating and $T$ order bounded. Then $T$ is ru-continuous by \cite[Theorem 2.4~ii)]{EEG-2025}.
Thus, $T^{-1}(S)\ni x_n\convr x$ implies $S\ni Tx_n\convr Tx$. So, $S\ni Tx_n\convtaur Tx$. 
Since $S$ is $\tau_{ru}$-closed, $x\in T^{-1}(S)$.
\end{proof}

It was observed by T. Ito \cite{Ito-1967} that a vector lattice $X$ is Archimedean if and only if $X_+$ is ru-closed. 
The following elementary lemma shows that it is happened also in every POVS.

\begin{lemma}\label{Lemma-2_5}
A POVS $X$ is Archimedean if and only if $X_+$ is ru-closed.
\end{lemma}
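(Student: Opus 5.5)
Both directions come straight from the definitions of the Archimedean property (Definition~\ref{Definition-1_6}) and ru-closedness (Definition~\ref{Definition-1_9}). Since ru-convergence is sequential, it suffices to test ru-closedness of $X_+$ on sequences, as in Definition~\ref{Definition-2_1}.

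($\Rightarrow$) Assume $X$ is Archimedean and let $X_+\ni x_n\convr x(w)$. Passing to a subsequence, as in the proof of Lemma~\ref{Lemma-2_2}~$iii)$, we may assume $\pm n(x_n-x)\leqslant w$ for all $n\in\mathbb{N}$; in particular $w\in X_+$. For each $n$ the bound gives $n(x-x_n)\leqslant w$, so
$$
n(-x) = n(x_n - x) - n x_n \leqslant w - n x_n \leqslant w,
$$
because $nx_n\in X_+$. Thus $n(-x)\leqslant w\in X_+$ for every $n$, and the Archimedean property yields $-x\leqslant 0$, that is, $x\in X_+$. Hence $X_+$ is ru-closed.

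($\Leftarrow$) Assume $X_+$ is ru-closed, and let $ny\leqslant u\in X_+$ for all $n\in\mathbb{N}$; we must show $-y\in X_+$. The plan is to exhibit a sequence in $X_+$ that ru-converges to $-y$. The natural choice is $x_n:=\frac1n u - y$, which lies in $X_+$ because $ny\leqslant u$. Then
$$
x_n-(-y)=\tfrac1n u .
$$
For the regulator we take $w:=u$ and pass to the subsequence $x_{n^2}$. Then $n\bigl(x_{n^2}+y\bigr)=\frac1n u\leqslant u$ since $u\in X_+$, and $-n\bigl(x_{n^2}+y\bigr)=-\frac1n u\leqslant u$ since $u+\frac1nu\in X_+$. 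Hence $x_{n^2}\convr -y\,(u)$, and ru-closedness gives $-y\in X_+$, i.e.\ $y\leqslant 0$.

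No step is a serious obstacle. The only points needing care are: the passage to a subsequence in the first direction, so that the estimate $\pm n(x_n-x)\leqslant w$ holds for every $n$; and, in the second direction, that Definition~\ref{Definition-1_8} requires an index $\alpha_n$ for each $n$, which the reindexing by $n^2$ supplies.
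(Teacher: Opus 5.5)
Your proof is correct and is essentially the paper's argument: in ($\Rightarrow$) the same subsequence normalization $\pm n(x_n-x)\leqslant w$ gives $n(-x)\leqslant w$, and in ($\Leftarrow$) the same sequence $\frac1n u-y\in X_+$ ru-converges to $-y$ with regulator $u$. Two cosmetic remarks: the inequality you actually use in ($\Rightarrow$) is $n(x_n-x)\leqslant w$ (not $n(x-x_n)\leqslant w$ as written, though both hold), and passing to $x_{n^2}$ is unnecessary, since $\alpha_n=n$ already works because $\pm\frac{n}{k}u\leqslant u$ for $k\geqslant n$.
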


\begin{proof}
$\Longrightarrow$:\
Let $X$ be Archimedean and $X_+\ni x_n\convr x_0(w)$. By passing to a subsequence, we may suppose $\pm n(x_0-x_n)\leqslant w$ for all $n\in\mathbb{N}$. Then, 
$nx_0=n(x_0-x_n)+nx_n\geqslant n(x_0-x_n)\geqslant-w$, and hence $w\geqslant n(-x_0)$ for all $n\in\mathbb{N}$. Since $X$  is Archimedean, $(-x_0)\leqslant 0$, or $x_0\in X_+$. 

$\Longleftarrow$:\
Let $X_+$ be ru-closed. Assume $nx\leqslant y\in X_+$ for all $n\in\mathbb{N}$ and some $x\in X$.
Thus, $\frac{1}{n}y-x\in X_+$ for all $n\in\mathbb{N}$. Since $\frac{1}{n}y\convr 0$  
then $\frac{1}{n}y-x\convr-x$. As $X_+$ is ru-closed, it follows $-x\in X_+$ or $x\leqslant 0$. Therefore, $X$ is Archimedean.
\end{proof}

\noindent
Lemma \ref{Lemma-2_5} should be compared with the following classical fact (cf., \cite[Lemma 2.3]{AT-2007}). 

\begin{assertion}\label{Assertion-2_1}
Every POVS $X$ admitting a linear topology $\tau$, for which $X_+$ is $\tau$-closed, is Archimedean.
\end{assertion}

\noindent
It is worth mentioning a recent paper \cite{GE-2023} by A.E. Gutman and the first author,
where an exhaustive description of the class of locally convex spaces in which all Archimedean cones are topologically closed is obtained.
However $\tau_{ru}$-topology need not to be linear. But, as it was men\-tion\-ed above,
$\tau_{ru}$-topology on a POVS $X$ is the strongest topology $\tau$ on $X$ such that $x_n\convr x\Longrightarrow x_n\convtau x$.
This leads to the following proposition.

\begin{proposition}\label{Proposition-2_1}
Let $X$ be a POVS. The following assertions are equivalent.
\begin{enumerate}[$i)$]
\item\
$X$ is Archimedean.
\item\
$X_+$ is ru-closed.
\item\
$(X_+)'_{ru}=X_+$.
\item\
$X_+$ is closed in the topology $\tau_{ru}$.
\item\
$X_+$ is closed in some topology $\tau$ on $X$ possessing the property: $x_n\convr x\Longrightarrow x_n\convtau x$.
\end{enumerate}
\end{proposition}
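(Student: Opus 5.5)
The plan is to prove the chain $i)\Leftrightarrow ii)\Rightarrow iii)\Rightarrow iv)\Rightarrow v)\Rightarrow ii)$. All of the equivalences involve only definitions, except the first one, which has already been done.

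The equivalence $i)\Leftrightarrow ii)$ is exactly Lemma \ref{Lemma-2_5}, so we simply invoke it.

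For $ii)\Rightarrow iii)$, recall that $(X_+)'_{ru}$ consists of ru-limits of sequences in $X_+$. Since every sequence is a net, ru-closedness of $X_+$ gives $(X_+)'_{ru}\subseteq X_+$. The reverse inclusion is trivial: a constant sequence $x_n=x$ satisfies $\pm n(x_n-x)=0\leqslant 0$, so $x_n\convr x(0)$.

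For $iii)\Rightarrow iv)$, Definition \ref{Definition-2_1} says that a set $S$ is $\tau_{ru}$-closed precisely when $S\ni x_n\convr x$ implies $x\in S$, that is, when $S'_{ru}\subseteq S$. Hence $iii)$ gives $iv)$ at once. Alternatively, $iii)$ together with transfinite induction shows that $(X_+)^{(\a)}_{ru}=X_+$ for all $\a$, and Lemma \ref{Lemma-2_2}~$ii)$ then yields $\overline{(X_+)}_{ru}=X_+$.

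For $iv)\Rightarrow v)$, take $\tau=\tau_{ru}$. This topology has the required property $x_n\convr x\Rightarrow x_n\convtaur x$, because a sequence that ru-converges to $x$ converges to $x$ in $\tau_{ru}$. To verify this, let $U$ be a $\tau_{ru}$-open set containing $x$. Suppose the sequence were frequently outside $U$. Then some subsequence lies in the closed set $X\setminus U$. That subsequence still ru-converges to $x$ with the same regulator, after reindexing the indices ${\a}_n$. Closedness of $X\setminus U$ then forces $x\in X\setminus U$, a contradiction.

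For $v)\Rightarrow ii)$, suppose $X_+$ is $\tau$-closed and $X_+\ni x_{\a}\convr x(w)$ for some net. By Definition \ref{Definition-1_8} there are indices ${\a}_n$ with $\pm n(x_{\a}-x)\leqslant w$ for ${\a}\geqslant{\a}_n$. We cannot assume the ${\a}_n$ are increasing, since the index set is only directed. So we choose by recursion ${\a}'_n\geqslant{\a}_n,{\a}'_{n-1}$. The sequence $y_n:=x_{{\a}'_n}$ then lies in $X_+$ and satisfies $\pm n(y_n-x)\leqslant w$, so $y_n\convr x$. The hypothesis gives $y_n\convtau x$, and $\tau$-closedness of $X_+$ gives $x\in X_+$.

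The only step with any subtlety is this reduction from nets to sequences, together with the check that sequential ru-convergence implies $\tau_{ru}$-convergence. The paper already remarks that both facts are routine, so no real obstacle is expected.
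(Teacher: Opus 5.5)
Your proof is correct and follows the paper's argument. You get $i)\Leftrightarrow ii)$ from Lemma~\ref{Lemma-2_5} and the other implications directly from Definitions~\ref{Definition-1_9} and~\ref{Definition-2_1}, and you prove $v)\Rightarrow ii)$ as the paper does, by applying $\tau$-closedness of $X_+$ to an ru-convergent sequence in $X_+$. The differences are minor: you route the cycle through $iii)\Rightarrow iv)$ rather than $ii)\Rightarrow iv)$, and you spell out two facts the paper treats as immediate, namely the reduction from nets to sequences and the $\tau_{ru}$-convergence of ru-convergent sequences.
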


\begin{proof}
$i)\iff ii)$ is the statement of Lemma \ref{Lemma-2_5}, $ii)\Longrightarrow iv)$ follows from Definition \ref{Definition-2_1}, and $ii)\iff iii)$ and $iv)\Longrightarrow v)$ are trivial.

\medskip
$v)\Longrightarrow ii)$\
Let $X_+$ be closed in a topology $\tau$ on $X$ satisfying $x_n\convr x\Longrightarrow x_n\convtau x$.
If $X_+$ is not ru-closed, for some sequence $(x_n)$ we have $X_+\ni x_n\convr x\notin X_+$.
Since $X_+$ is $\tau$-closed and $X_+\ni x_n\convtau x$, we obtain $x\in X_+$, a contradiction.
\end{proof}

One of the most exciting results concerning interrelations between Archimedean 
cones and ru-convergence is the Luxemburg-Moore-Veksler theorem:

\begin{assertion}\label{Assertion-2_2}
{\em (\cite{LM-1967,Veksler-1966})}
A lattice ideal $A$ in a vector lattice $X$ is ru-closed if and only $X/A$ is Archimedean. 
\end{assertion}

\noindent
The forward implication of Assertion \ref{Assertion-2_2} is rather trivial in the OVS case \cite[Proposition 3.2(a)]{E-2017}.
On the other hand, for an OVS $V$ from \cite[Example 2.5]{E-2017}, $V = V/\{0\}$ is not Archimedean although the ideal $\{0\}$ is ru-closed.
We do not have yet an extension of Assertion \ref{Assertion-2_2} to reasonable classes of POVSs beside vector lattices. 
However, for some natural ru-closed order ideal $A$ in a POVS $X$, the space $X/A$ equipped with a cone that is the quotient of the $\tau_{ru}$-closure of $X_+$ 
becomes an Archimedean OVS.

\begin{theorem}\label{Theorem-2_1}
Let $X$ be a POVS, $W$ is $\tau_{ru}$-closure of $X_+$, and $A=W\cap -W$. 
The following assertions hold.
\begin{enumerate}[$i)$]
\item\
$[W]$ is an Archime\-dean cone in $X/A$. 
\item\
$[X_+]$ is an almost Archime\-dean cone in $X/A$. 
\item\
$[W]$ is generating if and only if $X_+$ is generating.
\end{enumerate}
\end{theorem}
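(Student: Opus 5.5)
We will prove the three parts in order: the Archimedean property in $i)$ is the real work, $ii)$ reduces to it, and $iii)$ is bookkeeping with Lemmas \ref{Lemma-2_1} and \ref{Lemma-2_2}.

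For $i)$, Lemma \ref{Lemma-2_3}~$ii)$ gives that $W=\overline{(X_+)}_{ru}$ is a wedge, so Lemma \ref{Lemma-2_1}~$ii)$ shows that $[W]$ is a cone in $X/A$. For the Archimedean property, suppose $n[y]\leqslant[u]$ with $[u]\in[W]$ for all $n\in\mathbb{N}$. We may take $u\in W$, since $W+A\subseteq W$. For each $n$ there is $a_n\in A$ with $u-ny+a_n\in W$. Because $-a_n\in A\subseteq W$ and $W+W\subseteq W$, this gives $u-ny\in W$, i.e. $\frac1n u-y\in W$ for all $n$.

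We would like to let $n\to\infty$ and use that $W$ is $\tau_{ru}$-closed. The obstacle is that ru-convergence needs a regulator in $X_+$, while $u$ lies only in $W$. To get around this we first prove an auxiliary claim: \emph{every $x\in W$ satisfies $x\leqslant v$ for some $v\in X_+$.} We prove it by transfinite induction over $W^{(\a)}_{ru}$, using Lemma \ref{Lemma-2_2}~$ii)$.
\begin{enumerate}[(a)]
\item The case $\a=0$ is trivial.
\item Limit stages are trivial, since $W^{(\a)}_{ru}$ is then a union of earlier stages.
\item For a successor stage, let $W^{(\b)}_{ru}\ni x_n\convr x$ with $\pm n(x_n-x)\leqslant w$ and $w\in X_+$. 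As in the proof of Lemma \ref{Lemma-2_2}~$iii)$, $x\leqslant w+x_1$. By the induction hypothesis $x_1\leqslant v_1$ with $v_1\in X_+$, so $x\leqslant w+v_1\in X_+$.
\end{enumerate}
Applying the claim, take $v\in X_+$ with $u\leqslant v$. Then $\frac1n(v-u)\in X_+\subseteq W$, so $\frac1n v-y\in W$ for all $n$. Moreover $\frac1n v-y\convr -y$ with regulator $v$. Since $W$ is $\tau_{ru}$-closed, $-y\in W$, that is, $[y]\leqslant 0$ in $(X/A,[W])$.

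For $ii)$, $[X_+]$ is clearly a wedge contained in $[W]$. Hence $[X_+]\cap-[X_+]\subseteq[W]\cap-[W]=\{0\}$, so $[X_+]$ is a cone. Being a subcone of the Archimedean, hence almost Archimedean, cone $[W]$, it is almost Archimedean, as remarked after Definition \ref{Definition-1_6}.

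For $iii)$, Lemma \ref{Lemma-2_1}~$iii)$ applied to the wedge $W$ shows that $[W]$ is generating iff $W$ is generating in $X$. If $X_+$ is generating, then so is $W\supseteq X_+$. Conversely, Lemma \ref{Lemma-2_2}~$iii)$ with $S=X_+\subseteq X_+-X_+$ gives $W\subseteq X_+-X_+$. Hence $X=W-W\subseteq X_+-X_+$, so $X_+$ is generating.
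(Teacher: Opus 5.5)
Your proof is correct and follows the paper's argument essentially step by step. You get the cone property from Lemmas~\ref{Lemma-2_3} and \ref{Lemma-2_1}, absorb $a_n$ into $W$, repair the regulator issue via $W\subseteq X_+-X_+$, and conclude from $\tau_{ru}$-closedness of $W$; your parts $ii)$ and $iii)$ are identical to the paper's. The only differences are cosmetic. Your auxiliary claim is exactly Lemma~\ref{Lemma-2_2}~$iii)$, since an element lies below some $v\in X_+$ iff it belongs to $X_+-X_+$, so the transfinite induction (where $W^{(\a)}_{ru}$ should read $(X_+)^{(\a)}_{ru}$) can be replaced by that citation. You also pass from $u$ to $v$ using $W+X_+\subseteq W$, whereas the paper bounds the upper element two-sidedly by an element of $X_+$ and uses translation invariance of $\tau_{ru}$.
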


\begin{proof}
$i)$\
By Lemma \ref{Lemma-2_3}, $W$ is a wedge.
Lemma \ref{Lemma-2_1} implies that $[W]$ is a cone in $X/A$. 
Let $n[x]\leqslant_{[W]} [y]$ for some $x\in X$, $y\in W$, and all $n\in\mathbb{N}$.
Then $[x]\leqslant_{[W]}\frac{1}{n}[y]$, and hence $[\frac{1}{n}y-x]\in[W]$ for all $n\in\mathbb{N}$.
Therefore, there exists a sequence $(a_n)_n$ in $A$ with $\frac{1}{n}y-x+a_n\in W$.
From $A=W\cap -W$ we infer $-a_n\in W$ and hence $\frac{1}{n}y-x=-a_n+\frac{1}{n}y-x+a_n\in W+W\subseteq W$
for every $n\in\mathbb{N}$.
Since $X_+\subseteq X_+-X_+$ then $W=\overline{(X_+)}_{ru}\subseteq X_+-X_+$ by Lemma \ref{Lemma-2_2}~$iii)$, 
and hence $\pm y\leqslant u$ for some $u\in X_+$. So, we have $\frac{1}{n}y\convr 0(u)$ in $(X,X_+)$.
As $\frac{1}{n}y-x\in W$ for every $n\in\mathbb{N}$ then $(\frac{1}{n}y)_n\subseteq x+W$. 
It follows from $W=\overline{(X_+)}_{ru}$ that $W$ is $\tau_{ru}$-closed.
Then, $x+W$ is $\tau_{ru}$-closed because the topology $\tau_{ru}$ is translation invariant.
Since $x+W\ni\frac{1}{n}y\convr 0$ in $(X,X_+)$ and $x+W$ is $\tau_{ru}$-closed, we obtain
$0\in x+W$, and hence $x\in -W$.
It follows $[x]\in-[W]$, or $[x]\leqslant_{[W]}0$. Therefore, $(X/A,[W])$ is Archime\-dean.

\medskip
$ii)$\
Clearly, $[X_+]$ is a subcone of $[W]$. The rest follows from $i)$.

\medskip
$iii)$\
Suppose $X_+-X_+=X$. Since $X_+\subseteq W$, $W-W=X$. By Lemma \ref {Lemma-2_1}, $[W]-[W]=X/A$.

\smallskip
Suppose $[W]-[W]=X/A$. Lemma \ref {Lemma-2_1} implies $W-W=X$.
By Lemma \ref{Lemma-2_2}~$iii)$, $W\subseteq X_+-X_+$. Then, 
$$
   X_+-X_+\subseteq X=W-W\subseteq (X_+-X_+)-(X_+-X_+)\subseteq X_+-X_+,
$$
and hence $X_+-X_+=X$.
\end{proof}

\noindent
In connection with Theorem \ref{Theorem-2_1}, it would be interesting to describe those POVSs in which $[X_+]=[W]$.

\medskip
It is proved in \cite{PT-2009} that every OVS with an order unit has a unique (up to an order isomor\-phism) Archimedi\-za\-tion.
This result has being extend to arbitrary OVS in \cite{E-2017}. 

\begin{definition}\label{Definition-2_2}
{\rm (\cite[Definition 5]{E-2017})}
Let $X$ be a POVS. Consider the category $\mathcal{C}_{Arch}(X)$ whose objects are pairs $\left\langle Y,\phi\right\rangle$,
where $Y$ is an Archimedean OVS and $\phi:X\to Y$ is a positive linear operator, and morphisms
$\left\langle Y_1,\phi_1\right\rangle\to\left\langle Y_2,\phi_2\right\rangle$ are positive linear operators $T: Y_1\to Y_2$ satisfying
$T\circ \phi_1=\phi_2$. If $\left\langle Y_0,\phi_0\right\rangle$ is an initial object of $\mathcal{C}_{Arch}(X)$ 
then the OVS $Y_0$ is an {\color{blue}{Archimedeanization}} of $X$.
\end{definition}

\begin{assertion}\label{Assertion-2_3}
{\em (\cite[Theorem 1]{E-2017})}
Every OVS has an Archimedeanization.
\end{assertion}

\noindent
Unfortunately, the proof of Theorem 1 in \cite{E-2017} has a gap. We are going to discuss this issue in Section 3.
Here, we present an alternative proof of Assertion \ref{Assertion-2_3} based on Theorem \ref{Theorem-2_1}.

\begin{theorem}\label{Theorem-2_2}
Let $(X,X_+)$ be a POVS, $W=\overline{(X_+)}_{ru}$, and $A=W\cap -W$. Then $(X/A,[W])$ is an Archimedeanization of $(X,X_+)$.
\end{theorem}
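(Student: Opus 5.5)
We have to show that $\left\langle X/A,\pi\right\rangle$ is an initial object of $\mathcal{C}_{Arch}(X)$, where $\pi:X\to X/A$ is the quotient map. The argument has three steps.

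\textbf{Step 1: $\left\langle X/A,\pi\right\rangle$ is an object.} By Theorem \ref{Theorem-2_1}~$i)$, $(X/A,[W])$ is an Archimedean OVS. The map $\pi$ is linear. It is positive because $X_+\subseteq W$, so $\pi(X_+)\subseteq[W]$.

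\textbf{Step 2: existence of a factorization.} Fix an object $\left\langle Y,\phi\right\rangle$, so $Y$ is an Archimedean OVS and $\phi:X\to Y$ is positive.

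First we show $\phi(W)\subseteq Y_+$. By Lemma \ref{Lemma-2_4}~$i)$, $\phi$ is $\tau_{ru}$-continuous. By Proposition \ref{Proposition-2_1}, $Y_+$ is $\tau_{ru}$-closed in $Y$. Hence $\phi^{-1}(Y_+)$ is a $\tau_{ru}$-closed subset of $X$ containing $X_+$. It therefore contains $\overline{(X_+)}_{ru}=W$.

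Next, $\phi(A)=\{0\}$. Indeed, $\phi(A)\subseteq Y_+\cap -Y_+$, and this intersection is $\{0\}$ because $Y_+$ is a cone (Archimedean cones are cones by assumption on objects).

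Consequently $\phi$ factors through a linear map $T:X/A\to Y$ with $T[x]:=\phi(x)$. This is well defined since $A\subseteq\ker\phi$. It satisfies $T\circ\pi=\phi$, and $T[W]=\phi(W)\subseteq Y_+$, so $T$ is positive.

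\textbf{Step 3: uniqueness.} Suppose $S:X/A\to Y$ is any morphism with $S\circ\pi=\phi$. Since $\pi$ is surjective, $S=T$. Hence $\left\langle X/A,\pi\right\rangle$ is initial, and $(X/A,[W])$ is an Archimedeanization of $X$.

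The main obstacle is the inclusion $\phi(W)\subseteq Y_+$, i.e.\ that a positive map carries the $\tau_{ru}$-closure of $X_+$ into $Y_+$. This is exactly where the topological description is needed. Lemma \ref{Lemma-2_4} supplies $\tau_{ru}$-continuity of positive operators even though $\tau_{ru}$ need not be linear. Proposition \ref{Proposition-2_1} supplies $\tau_{ru}$-closedness of $Y_+$.

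A sequential argument through the ordinal hierarchy $W^{(\alpha)}_{ru}$ of Lemma \ref{Lemma-2_2} would also work, by transfinite induction. If $\phi(W^{(\beta)}_{ru})\subseteq Y_+$ and $W^{(\beta)}_{ru}\ni x_n\convr x$, then $\phi(x_n)\convr\phi(x)$ in $Y$ with regulator $\phi(u)$. Since $Y_+$ is ru-closed by Lemma \ref{Lemma-2_5}, this gives $\phi(x)\in Y_+$. Limit ordinals are handled by taking unions. The topological route is shorter, so I would use it.
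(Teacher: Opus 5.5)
Your proof is correct and follows essentially the same route as the paper. Positivity of $\phi$ gives $\tau_{ru}$-continuity (Lemma~\ref{Lemma-2_4}), and together with $\tau_{ru}$-closedness of $Y_+$ this yields $\phi(W)\subseteq Y_+$, hence $\phi(A)=\{0\}$ and $\phi$ factors through a positive $T$ on $(X/A,[W])$. The differences are cosmetic: you obtain the key inclusion from the closed preimage $\phi^{-1}(Y_+)\supseteq X_+$ rather than from a net, and you check uniqueness of $T$ via surjectivity of the quotient map, which the paper leaves implicit.
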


\begin{proof}
The quotient map $\phi_0:X\to(X/A,[W])$ is positive. By Theorem \ref{Theorem-2_1}, 
$\left\langle(X/A,[W]),\phi_0\right\rangle$ is an object of $\mathcal{C}_{Arch}(X)$.
We need to show that $\left\langle(X/A,[W]),\phi_0\right\rangle$ is initial. To this end, take an arbitrary 
object $\left\langle Y,\phi\right\rangle$ of $\mathcal{C}_{Arch}(X)$. It remains to find a positive linear operator
$T:(X/A,[W])\to Y$ such that $T\circ\phi_0=\phi$. Since $Y$ is Archimedean,
it follows from Lemma \ref{Lemma-2_5} that $Y_+$ is ru-closed, and hence is $\tau_{ru}$-closed.
First, we show $\phi(W)\subseteq Y_+$. Let $x\in W$. There exists a net $(x_{\a})$ such that 
$X_+\ni x_{\a}\convtaur x$. By Lemma \ref{Lemma-2_4}, $\phi:X\to Y$ is $\tau_{ru}$-continuous. Therefore, we obtain 
$Y_+\ni\phi(x_{\a})\convtaur\phi(x)$. As $Y_+$ is $\tau_{ru}$-closed, $\phi(x)\in Y_+$. 
Thus, $\phi(W)\subseteq Y_+$ and $\phi(-W)\subseteq -Y_+$, and hence $\phi(A)\subseteq Y_+\bigcap-Y_+=\{0\}$. 
So, an operator $T:(X/A,[W])\to Y$ such that $T[x]:=\phi(x)$ is well defined and positive. Clearly, $T\circ\phi_0=\phi$. 
\end{proof}

\noindent
Note that $W=\overline{(X_+)}_{ru}=(X_+)^{({\w}_1)}_{ru}$ by Lemma \ref{Lemma-2_3}. Theorem \ref{Theorem-2_2} motivates the following.

\begin{definition}\label{Definition-2_3}
A POVS $X$ is of the {\color{blue}{Archimedeanization type}} ${\a}_X$ whenever
${\a}_X$ is the first ordinal ${\a}$ such that $\overline{(X_+)}_{ru}=(X_+)^{({\a})}_{ru}$.
\end{definition}

\noindent
Lemma \ref{Lemma-2_3} implies that ${\a}_X\leqslant{\w}_1$ for every POVS $X$. 
It can be seen that ${\a}_X=2$ for the vector lattice $X$ in Nakayama's example 
(cf., Example 6.4 in \cite{LM-1967}). Let us consider another class of POVSs, in which the computation of the Archimedeanization type is easy. 

\begin{definition}\label{Definition-2_4}
{\rm (\cite[Definition 3.1]{EEG-2026})}
A POVS $X$ satisfies the {\color{blue}{condition \text{\rm(R)}}} $($briefly, {\color{blue}{$X\in (R)$}}$)$ if, for each sequence $(y_k)$ in $X$, there exist $y\in X$ 
and a sequence $(\lambda_k)$ in $\mathbb{R}\setminus\{0\}$ such that $\pm\lambda_k y_k\leqslant y$ for all $k$.
\end{definition}

\noindent
An element $u$ of a POVS $X$ is an {\color{blue}{order unit}} whenever $X=\bigcup\limits_{n\in\mathbb{N}}[-nu,nu]$.
It is easy to see that every POVS possessing an order unit and every Banach lattice satisfies the condition \text{\rm(R)}.  

\begin{proposition}\label{Proposition-2_2}
Let $X$ be a POVS satisfying the condition \text{\rm(R)}.
Then $X_+$ is generating and $\overline{S}_{ru}=S^{(1)}_{ru}$ for every $S\subseteq X$.
\end{proposition}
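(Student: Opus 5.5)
The plan has two parts: first show that $X_+$ is generating, then show that $S^{(1)}_{ru}$ is already ru-closed. For the first part, I would apply condition (R) to the constant sequence $y_k:=x$ for a given $x\in X$. This yields $y\in X$ and $\lambda\neq 0$ with $\pm\lambda x\leqslant y$. In particular $y\in X_+$, since $2y\geqslant \lambda x-\lambda x=0$. Replacing $\lambda$ by $|\lambda|$, we get $|\lambda|x\leqslant y$. Then
$$x=\tfrac{1}{|\lambda|}y-\tfrac{1}{|\lambda|}\bigl(y-|\lambda|x\bigr)\in X_+-X_+.$$

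For the second part, since $S\subseteq S^{(1)}_{ru}\subseteq \overline{S}_{ru}$, Lemma \ref{Lemma-2_2}~$ii)$ reduces the claim to showing that $S^{(1)}_{ru}$ is ru-closed, i.e.\ $\bigl(S^{(1)}_{ru}\bigr)'_{ru}\subseteq S^{(1)}_{ru}$. Then $S^{(\a)}_{ru}=S^{(1)}_{ru}$ for all $\a\geqslant 1$ by transfinite induction.

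So let $S^{(1)}_{ru}\ni x_k\convr x$. Passing to a subsequence, we may assume $\pm k(x_k-x)\leqslant w$ for all $k$. For each $k$ pick a sequence $(s^k_n)_n$ in $S$ and a regulator $w_k$ with $\pm n(s^k_n-x_k)\leqslant w_k$ for all $n$, again after passing to a subsequence. The difficulty is that the regulators $w_k$ vary with $k$, and a diagonal sequence needs one common regulator. This is exactly where (R) is used. Apply it to the sequence $(w_k)$ to obtain $u\in X$ and $\lambda_k\neq 0$ with $\pm\lambda_k w_k\leqslant u$. Since $w_k\in X_+$, we have $|\lambda_k|w_k\leqslant u$, so we may take $\lambda_k>0$. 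Then
$$\pm n(s^k_n-x_k)\leqslant \tfrac{1}{\lambda_k}u \quad\text{for all } n.$$
Choose $n_k\geqslant k/\lambda_k$ and $n_k\geqslant k$, and set $z_k:=s^k_{n_k}\in S$. Then
$$\pm k(z_k-x_k)\leqslant \tfrac{k}{n_k\lambda_k}u\leqslant u.$$

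Combining this with the bound on $x_k-x$ gives
$$\pm k(z_k-x)=\pm k(z_k-x_k)\pm k(x_k-x)\leqslant u+w.$$
Hence $z_k\convr x(u+w)$, and therefore $x\in S'_{ru}=S^{(1)}_{ru}$. The main obstacle, the uniformization of the regulators, is handled entirely by (R). The only care needed is choosing the signs of $\lambda_k$ and the indices $n_k$ so that the rescaled bounds stay below a single fixed regulator.
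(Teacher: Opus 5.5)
Your proof is correct and follows the paper's route: the same decomposition of $\lambda x$ (up to sign) as a difference of positive elements gives generation, and the same reduction via Lemma \ref{Lemma-2_2} leads to a diagonal argument showing that $S^{(1)}_{ru}$ is ru-closed. The only difference is that you build the common regulator yourself, by applying (R) to the regulators $(w_k)$ and choosing $n_k\geqslant k/\lambda_k$ (which implicitly uses $u\in X_+$), whereas the paper quotes \cite[Lemma 3.2]{EEG-2026} for this step, so your version is self-contained.
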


\begin{proof}
Let $x\in X$. By Definition \ref{Definition-2_4},
there exist $y\in X$ and $0\ne\lambda\in\mathbb{R}$ with $\pm\lambda x\leqslant y$. It follows $y,y+\lambda x\in X_+$.
Then, $\lambda x=(y+\lambda x)-y\in X_+-X_+$, and hence $x\in X_+-X_+$. Therefore, $X_+$ is generating. 

\medskip
Let $S\subseteq X$. By Lemma \ref{Lemma-2_2}, in order to prove $\overline{S}_{ru}=S^{(1)}_{ru}$ it suffices to show that $S^{(1)}_{ru}$ is ru-closed. 
Let $S^{(1)}_{ru}\ni z_n\convr z$. For each $n$, find a a sequence $(z_n^k)_k$ in $X_+$ such that $z_n^k\convr z_n$. 
By \cite[Lemma 3.2]{EEG-2026},
any countable set of ru-convergent sequences in $X$ has a common regulator. Thus, we may assume $z_n\convr z(u)$ and $z_n^k\convr z_n(u)$
for some $u\in X_+$ and all $k\in\mathbb{N}$. By passing to subsequences, if necessary, we may assume 
$$
   \pm n(z_n-z)\leqslant u \ \ \ (n\in\mathbb{N}) \ \ \ \text{\rm and} \ \ \ 
   \pm k(z_n^k-z_n)\leqslant u \ \ \ (k,n\in\mathbb{N}). 
$$
In particular, we have $\pm n(z_n-z)\leqslant u$ and $\pm n(z_n^n-z_n)\leqslant u$ for all $n\in\mathbb{N}$.
Summing the inequality up gives $\pm n(z_n^n-z)\leqslant 2u$ for all $n\in\mathbb{N}$. Then, $X_+\ni z_n^n\convr z$. It follows $z\in S^{(1)}_{ru}$,
and hence $S^{(1)}_{ru}$ is ru-closed.
\end{proof}

\begin{corollary}\label{Corollary-2_1}
If $X\in (R)$ then ${\a}_X\leqslant 1$, and if additionally $X$ is non-Archimedean, then ${\a}_X=1$.
In particular, ${\a}_X=1$ for every finite-dimensional non-Archimedean POVS $X$. 
\end{corollary}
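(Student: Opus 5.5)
The corollary follows from Proposition \ref{Proposition-2_2} combined with Proposition \ref{Proposition-2_1}. I would handle its three assertions in order.

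\emph{First assertion.} Let $X\in(R)$ and apply Proposition \ref{Proposition-2_2} with $S=X_+$. This gives $\overline{(X_+)}_{ru}=(X_+)^{(1)}_{ru}$. By Definition \ref{Definition-2_3}, ${\a}_X$ is the least ordinal ${\a}$ with $\overline{(X_+)}_{ru}=(X_+)^{({\a})}_{ru}$, so ${\a}_X\leqslant 1$.

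\emph{Second assertion.} If additionally $X$ is non-Archimedean, then ${\a}_X=0$ is impossible. Indeed, ${\a}_X=0$ would mean $\overline{(X_+)}_{ru}=(X_+)^{(0)}_{ru}=X_+$, so $X_+$ would be $\tau_{ru}$-closed. The implication $iv)\Rightarrow i)$ of Proposition \ref{Proposition-2_1} would then make $X$ Archimedean, a contradiction. Hence ${\a}_X=1$.

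\emph{Finite-dimensional case.} It remains to check that every finite-dimensional POVS $X$ satisfies condition (R); the claim ${\a}_X=1$ then follows from the second assertion. This is the only step needing an argument, since the positive wedge need not be generating in finite dimensions. Fix a sequence $(y_k)$ in $X$. My first attempt would be to take $y=0$ and $\lambda_k\ne0$ arbitrary, but this requires $\pm\lambda_k y_k\leqslant 0$, i.e.\ $\lambda_k y_k\in X_+\cap -X_+$, which fails in general. So I would split into two cases.

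\emph{Case 1: $X_+$ is generating.} Choose a basis $e_1,\dots,e_m$ of $X$ with every $\pm e_i\in X_+-X_+$. Write $\pm e_i=p_i^{\pm}-q_i^{\pm}$ with $p_i^{\pm},q_i^{\pm}\in X_+$, and set $u=\sum_i\bigl(p_i^{+}+p_i^{-}\bigr)$. Then $\pm e_i\leqslant u$ for each $i$, so $u$ is an order unit. Scaling each $y_k$ by a small nonzero $\lambda_k$ (determined by its coordinates) gives $\pm\lambda_k y_k\leqslant u$. Hence $X\in(R)$ with $y=u$.

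\emph{Case 2: $X_+$ is not generating.} Here (R) genuinely fails, by the first part of Proposition \ref{Proposition-2_2}, so the statement must be read as concerning generating wedges or handled separately. I would reduce to the subspace $X_0=X_+-X_+$, which is finite-dimensional and has generating wedge $X_+$, so Case 1 applies to it. By Lemma \ref{Lemma-2_2}~$iii)$, $\overline{(X_+)}_{ru}\subseteq X_0$. Moreover, every ru-convergent sequence in $X$ with terms in $X_0$ has its limit in $X_0$, and its regulator lies in $X_+\subseteq X_0$. Therefore the iterates $(X_+)^{({\a})}_{ru}$ computed in $X$ and in $(X_0,X_+)$ coincide, and ${\a}_X={\a}_{X_0}=1$.

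I expect this last reduction, the agreement of the ru-closure iterates in $X$ and $X_0$, to be the main obstacle. Everything else is direct bookkeeping from Propositions \ref{Proposition-2_1} and \ref{Proposition-2_2}.
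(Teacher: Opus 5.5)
Your proof is correct. For the first two assertions it is exactly the derivation the paper intends; the corollary is stated without proof right after Proposition~\ref{Proposition-2_2}. There, $\overline{(X_+)}_{ru}=(X_+)^{(1)}_{ru}$ gives $\alpha_X\leqslant 1$, and $\alpha_X=0$ would make $X_+$ closed in $\tau_{ru}$, hence $X$ Archimedean by Proposition~\ref{Proposition-2_1} (equivalently Lemma~\ref{Lemma-2_5}).

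Where you go beyond the paper is the finite-dimensional clause. The paper tacitly treats it as an instance of the first sentence, via ``order unit $\Rightarrow$ (R)''. As you observe, (R) forces $X_+$ to be generating, so a finite-dimensional POVS with non-generating wedge is not in (R), and the clause does not follow literally. Your reduction to $X_0=X_+-X_+$ repairs this:
\begin{itemize}
\item regulators lie in $X_+\subseteq X_0$, and limits of ru-convergent sequences from $X_0$ stay in $X_0$;
\item hence the derived sets, all the iterates $(X_+)^{(\alpha)}_{ru}$, and the closure coincide in $X$ and in $(X_0,X_+)$;
\item your Case~1 (a generating wedge in finite dimensions yields an order unit) gives $X_0\in(R)$.
\end{itemize}

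One small point to make explicit at the end. To write $\alpha_{X_0}=1$ rather than only $\alpha_{X_0}\leqslant 1$, you need one of two things. Either note that $X_0$ is non-Archimedean: $nx\leqslant u$ with $u\in X_+$ gives $nx=u-(u-nx)\in X_0$, so any witness of non-Archimedeanity of $X$ already lies in $X_0$. Or simply apply your second-assertion argument, which never used (R), directly to $X$ to exclude $\alpha_X=0$.
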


\noindent
In general, it does not follow from ${\a}_X\leqslant 1$ that $X\in (R)$ even when $X_+$ is generating. Indeed, ${\a}_{c_{00}}=0$ yet $c_{00}\notin (R)$.

\section{On an almost Archimedeanization of a POVS}

\hspace{4mm}
In this section, we discuss a construction of almost Archimedeanization motivated by \cite{E-2017}. Let $X$ be a POVS. Following \cite{E-2017}, we denote 
$$
   D_X:=\{x\in X|(\exists\xi\in X_+)(\forall n\in\mathbb{N})\ nx+\xi\in X_+\}, \ \ \text{\rm and}
$$
$$
   N_X:=\bigl\{x\in X: (\exists\xi\in X_+)(\forall n\in\mathbb{N})\pm nx+\xi\in X_+\bigl\}\ \ \ -
$$
the set of {\color{blue}{infinitesimals}} of $(X,X_+)$. It is straightforward to see: $D_X$ is a wedge; $X_+\subseteq D_X$; 
$N_X$ is an order ideal in $(X,X_+)$; $(X,X_+)$ is almost Archimedean if and only if $N_X=\{0\}$;  
$$
   D_X=\{x\in X|(\exists\xi\in X_+)(\forall r>0)\ x+r\xi\in X_+\};
$$
and
$$
   N_X=\{x\in X|(\exists\xi\in X_+)(\forall r\in\mathbb{R})\ rx+\xi\in X_+\}.
   \eqno(1)
$$

\begin{proposition}\label{Proposition-3_1}
Let $X$ be a POVS. The following assertions hold.
\begin{enumerate}[$i)$]
\item\
$X$ is Archimedean if and only if $D_X=X_+$.
\item\
$D_X=(X_+)^{(1)}_{ru}$
\item\
$D_X\cap-D_X=N_X$.
\item\
$X$ is almost Archimedean $\iff$ $D_X$ is a cone $\iff$ $X_+$ does not contain a straight line.
\end{enumerate}
\end{proposition}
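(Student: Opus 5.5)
I would prove $ii)$ first, since $i)$ then follows at once from Proposition \ref{Proposition-2_1} ($i)\iff iii)$). For $ii)$ I would use the reformulation $D_X=\{x:(\exists\xi\in X_+)(\forall r>0)\ x+r\xi\in X_+\}$. Let $x\in D_X$ with witness $\xi$. Put $x_n:=x+\frac1n\xi\in X_+$. Then $\pm n(x_n-x)=\pm\xi\leqslant\xi$ holds because $2\xi\in X_+$ and $0\in X_+$. Hence $x_n\convr x(\xi)$, so $x\in(X_+)^{(1)}_{ru}$.

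For the reverse inclusion, let $X_+\ni x_n\convr x$. After passing to a subsequence we have $\pm n(x_n-x)\leqslant w$ with $w\in X_+$. Then $n(x-x_n)+w\in X_+$, so $x+\frac1n w\in x_n+X_+\subseteq X_+$ for every $n$. For arbitrary $r>0$, choose $n$ with $\frac1n\leqslant r$. Then $x+rw=(x+\frac1n w)+(r-\frac1n)w\in X_+$, so $x\in D_X$ with witness $\xi=w$. This proves $ii)$, and $i)$ follows as noted: $X$ is Archimedean iff $(X_+)'_{ru}=X_+$ iff $D_X=X_+$.

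For $iii)$, the inclusion $N_X\subseteq D_X\cap-D_X$ is immediate from the definitions, using the same $\xi$ for $x$ and for $-x$. Conversely, let $x\in D_X$ with witness $\xi_1$ and $-x\in D_X$ with witness $\xi_2$. Set $\xi=\xi_1+\xi_2\in X_+$. Then $nx+\xi=(nx+\xi_1)+\xi_2\in X_+$, and symmetrically $-nx+\xi\in X_+$, so $x\in N_X$.

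For $iv)$, the step needing most care is the geometric condition. By $iii)$, $D_X$ is a cone iff $N_X=\{0\}$, and $N_X=\{0\}$ iff $X$ is almost Archimedean, as already noted in the text.

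For "almost Archimedean $\iff$ $X_+$ contains no straight line" I would use formula (1). Read $x\in N_X$ with witness $\xi$ as saying that the line $\xi+\mathbb{R}x$ lies in $X_+$. So $x\ne0$ in $N_X$ gives a line in $X_+$. Conversely, suppose a line $p+\mathbb{R}x$ with $x\neq0$ lies in $X_+$. Then $p=\frac12\bigl((p+x)+(p-x)\bigr)\in X_+$ serves as $\xi$ in (1), so $x\in N_X\setminus\{0\}$.

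One subtlety is whether ``straight line'' includes lines not through points of $X_+$ in some special position. With the affine interpretation $\{p+rx:r\in\mathbb{R}\}$, the argument above handles all cases, and no obstacle remains.
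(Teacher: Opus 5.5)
Your proof is correct. For $ii)$--$iv)$ it follows the paper's argument: in $iii)$, which the paper calls trivial, your common witness $\xi_1+\xi_2$ supplies the omitted step, and in $iv)$ you rearrange the chain of implications but use the same two ingredients, $iii)$ and formula (1).

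The only real difference is in $i)$. The paper checks both directions directly from the definition of the Archimedean property. You instead deduce $i)$ from $ii)$ together with Proposition \ref{Proposition-2_1} ($i)\iff iii)$). That route is legitimate and non-circular, since Proposition \ref{Proposition-2_1} rests only on Lemma \ref{Lemma-2_5}. The paper's direct check is equally short and keeps $i)$ independent of ru-convergence.
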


\begin{proof}
$i)$\
$(\Longrightarrow)$:\ 
Let $X$ be Archimedean. As $X_+\subseteq D_X$, we need to prove $D_X\subseteq X_+$.
So, let $x\in D_X$. Then, for some $\xi\in X_+$, $\xi\geqslant n(-x)$ for every $n\in\mathbb{N}$.
Since $X$ is Archimedean, we infer $-x\leqslant 0$, or $x\in X_+$.

\smallskip
$(\Longleftarrow)$:\ 
Let $D_X=X_+$. Assume that $ny\leqslant\xi$ for some $\xi\in X_+$ and all $n\in\mathbb{N}$.
Then, $n(-y)+\xi\geqslant 0$ for all $n\in\mathbb{N}$, and hence $-y\in D_X$. Since $D_X=X_+$,
we infer $y\leqslant 0$. Thus, $X$ is Archimedean.

\medskip
$ii)$\
If $x\in D_X$ then, for some $\xi\in X_+$ we have $x+n^{-1}\xi\in X_+$ for all $n\in\mathbb{N}$. 
Since $x+n^{-1}\xi\convr x$, then $x\in(X_+)^{(1)}_{ru}$. Thus, $D_X\subseteq(X_+)^{(1)}_{ru}$. 

\smallskip
If $x\in (X_+)^{(1)}_{ru}$, there exist $\xi\in X_+$ and a sequence $(x_n)$ in $X_+$ satisfying $\pm n(x_n-x)\leqslant\xi$ for all $n\in\mathbb{N}$. 
Then $0\leqslant nx_n\leqslant nx+\xi$, and hence $nx+\xi\in X_+$ for all $n\in\mathbb{N}$. So, $x\in D_X$.
Thus, $(X_+)^{(1)}_{ru}\subseteq D_X$.

\medskip
$iii)$ is trivial.

\medskip
$iv)$\
Let $X$ be almost Archimedean. By $iii)$, $D_X\cap-D_X=N_X=\{0\}$, and hence $D_X$ is a cone.
Let $D_X$ be a cone. Then $X_+$ is a cone. Suppose $X_+$ contains a straight line, say $u+\mathbb{R}\cdot v\subseteq X_+$ for some $u,v\in X$, $v\ne 0$.
Since $X_+$ is a cone then $u\ne 0$. As $u-nv\subseteq X_+$ for all $n\in\mathbb{Z}$ then $\pm nv\leqslant u$
for all $n\in\mathbb{N}$, or $v\in N_X$. By $iii)$, $N_X=D_X\cap-D_X=\{0\}$, and hence $v=0$, a contradiction. So, $X_+$ does not contain a straight line.
Suppose $X$ is not almost Archimedean. Then, $N_X\ne\{0\}$, and hence $X_+$ contains a straight line by formula (1).
\end{proof}

\noindent
In general, $D_X$ is not necessarily a cone even if $\text{\rm dim}(X)=2$ (take, for example, $\mathbb{R}^2_{lex}$). 
We collect the further properties of subsets $D_X$ and $N_X$ of a POVS $X$ in the following proposition.

\begin{proposition}\label{Proposition-3_2}
{\rm (\cite[Propositions 3.1 and 3.2]{E-2017})}
Let $X$ be a POVS. Then following assertions hold.
\begin{enumerate}[$i)$]
\item\
$N_X$ is an order ideal in the POVS $(X,D_X)$ satisfying $N_X\subseteq\overline{\{0\}}_{ru}$.
\item\
If $X$ has an order unit and $A$ is an order ideal in $X$ then $N_{X/A}=\overline{\{[0]\}}_{ru}$ in $(X/A,[X_+])$.
\item\
If $A$ is an order ideal in $X$ such that $(X/A,[X_+])$ is almost Archimedean then $A$ is ru-closed in $X$.
\item\
$X$ is almost Archimedean  $\iff$ $\overline{\{0\}}_{ru}=\{0\}$.
\item\
If $X$ has an order unit then $(X/N_X,[D_X])$ is Archimedean.
\item\
If $(X/N_X,[D_X])$ is Archi\-me\-dean then $(X/N_X,[X_+])$ is almost Archi\-me\-dean. In particular,
if $X$ has an order unit then $(X/N_X,[X_+])$ is almost Archimedean.
\item\
If $X$ is a vector lattice then $(X/N_X,[D_X])$ is a vector lattice.
\end{enumerate}
\end{proposition}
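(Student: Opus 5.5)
I would prove the seven items one at a time, by unwinding the definitions of $D_X$, $N_X$ and ru-convergence. Proposition \ref{Proposition-3_1} supplies the basic identities $D_X=(X_+)^{(1)}_{ru}$ and $D_X\cap-D_X=N_X$.

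For $i)$, Proposition \ref{Proposition-3_1}~$iii)$ gives $N_X=D_X\cap-D_X$, and Lemma \ref{Lemma-2_1}~$i)$ applied to the wedge $D_X$ then shows that $N_X$ is an order ideal in $(X,D_X)$. For the inclusion $N_X\subseteq\overline{\{0\}}_{ru}$, take $x\in N_X$ with witness $\xi$ as in (1). Then $\pm n x\leqslant\xi$ for all $n$, so the constant sequence $0$ satisfies $0\convr x(\xi)$, and $x\in\{0\}^{(1)}_{ru}$.

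Item $iv)$ splits into two directions.
\begin{enumerate}[-]
\item\ The direction $\Leftarrow$ follows from $i)$: $N_X\subseteq\overline{\{0\}}_{ru}=\{0\}$.
\item\ For $\Rightarrow$, almost Archimedeanity gives uniqueness of ru-limits. Hence $\{0\}'_{ru}=N_X=\{0\}$, since $0\convr x(w)$ means exactly $\pm nx\leqslant w$ for all $n$. So $\{0\}$ is ru-closed, and therefore $\tau_{ru}$-closed.
\end{enumerate}

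For $iii)$, the quotient map $q:X\to(X/A,[X_+])$ is positive, hence $\tau_{ru}$-continuous by Lemma \ref{Lemma-2_4}. By $iv)$ applied to $X/A$, the singleton $\{[0]\}$ is $\tau_{ru}$-closed, so $A=q^{-1}(\{[0]\})$ is $\tau_{ru}$-closed, and in particular ru-closed.

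Item $ii)$ is where the order unit $u$ is essential. By the remark before Proposition \ref{Proposition-2_2}, an order unit gives condition (R), so Proposition \ref{Proposition-2_2} yields $\overline{\{[0]\}}_{ru}=\{[0]\}^{(1)}_{ru}$ in $X/A$, using that $[u]$ is an order unit of $X/A$. Now $\{[0]\}^{(1)}_{ru}$ consists of the $[x]$ with $\pm n[x]\leqslant[w]$ for all $n$, which is exactly $N_{X/A}$.

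For $v)$, I would again use the order unit $u$. Suppose $n[x]\leqslant_{[D_X]}[y]$ with $[y]\in[D_X]$; we may take $y\leqslant cu$ for some $c$. Then for each $n$ there is $a_n\in N_X$ with $y-nx+a_n\in D_X$. This is the main obstacle: the quantities $a_n$ and the $D_X$-witnesses all vary with $n$. I would absorb them into multiples of $u$. Since $a_n\in N_X$, we have $\pm ma_n\leqslant\xi_n\leqslant c_n u$ for all $m$, so $\pm a_n\leqslant\frac{1}{k}u$ for every $k$. Likewise, $D_X$-membership is tested against $\frac{1}{k}u$. This yields $nx\leqslant y+\varepsilon u$ in the $D_X$ sense for every $\varepsilon>0$ and every $n$. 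Hence $nx\leqslant (c+1)u$ up to arbitrarily small multiples of $u$, so $x+\frac{1}{m}u$ is bounded suitably and $-x\in D_X$ with witness $u$, i.e.\ $[x]\leqslant 0$. The key observation is that when the order unit is used as the witness, $D_X=\{x: x+ru\in X_+\ \forall r>0\}$, and this set is closed under the $\varepsilon$-approximations.

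For $vi)$, assume $\pm n[x]\leqslant_{[X_+]}[\xi]$. Then also $\pm n[x]\leqslant_{[D_X]}[\xi]$, so Archimedeanity gives $\pm[x]\leqslant_{[D_X]}0$. Since $[D_X]$ is a cone by Lemma \ref{Lemma-2_1}, it follows that $[x]=0$. The second sentence of $vi)$ then follows from $v)$.

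For $vii)$, I would show that the quotient map $X\to X/N_X$, with $N_X$ treated as a lattice ideal, maps $|x|$ to a supremum with respect to $[D_X]$. First, $N_X$ is solid: $|y|\leqslant|x|$ with $x\in N_X$ forces $y\in N_X$, by the same witness $\xi$. Then I would verify directly that $[x\vee y]$ is the least $[D_X]$-upper bound of $[x]$ and $[y]$. The check is that $z-x,\,z-y\in D_X$ with a common witness $\xi$ implies $z-x\vee y+\frac{1}{n}\xi\geqslant0$, because $(z+\frac1n\xi)-x\vee y=(z+\frac1n\xi-x)\wedge(z+\frac1n\xi-y)$.
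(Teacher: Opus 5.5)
The paper gives no proof of this proposition; it is imported from \cite{E-2017}, so there is no in-paper argument to match. Your item-by-item proof is correct. Its advantage is that it derives everything from results the paper itself proves: Lemma~\ref{Lemma-2_1}, Lemma~\ref{Lemma-2_4}, Proposition~\ref{Proposition-2_2}, Proposition~\ref{Proposition-3_1} and Assertion~\ref{Assertion-3_1}.

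\textbf{Comparison with a direct route.} Your topological route turns $iii)$ into a one-liner: $A$ is the preimage of the $\tau_{ru}$-closed set $\{[0]\}$ under a positive map. It also makes $ii)$ an instance of one-step closure under condition (R). Elementary arguments are just as short. For $iii)$: if $A\ni x_n\to x$ with regulator $w$, then $\pm n[x]\leqslant[w]$, so $[x]\in N_{X/A}=\{[0]\}$. Be aware that Proposition~\ref{Proposition-2_2} rests on the external common-regulator lemma of \cite{EEG-2026}. With an order unit this is immediate, since every regulator lies below a multiple of $u$. So in $ii)$ you can simply check directly that $N_{X/A}$ is ru-closed.

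\textbf{Points to tighten.}
\begin{enumerate}[$\bullet$]
\item In $iv)$, the appeal to uniqueness of ru-limits is superfluous: $\{0\}'_{ru}=N_X$ by Assertion~\ref{Assertion-3_1}.
\item In $v)$, the ``main obstacle'' disappears. Since $N_X\subseteq D_X$ and $D_X$ is a wedge, $y-nx=(y-nx+a_n)+(-a_n)\in D_X$. Every witness is dominated by a multiple of $u$, so $y-nx+u\in X_+$. Together with $y\leqslant cu$ this gives $n(-x)+(c+1)u\in X_+$ for all $n$, which is literally $-x\in D_X$ with witness $(c+1)u$. This explicit inequality should replace the vague phrase about $x+\frac1m u$ being ``bounded suitably''. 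In effect you are proving that $(X,D_X)$ itself is Archimedean when $X$ has an order unit. That is exactly where the order unit is indispensable, cf.\ the paper's remark on \cite[Proposition 3.4]{E-2017}.
\item In $vii)$, say explicitly that $[x]\leqslant_{[D_X]}[z]$ yields $z-x\in D_X$, again because $N_X\subseteq D_X$. Your common-witness computation then finishes the argument.
\end{enumerate}
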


The next elementary assertion strengthens Proposition \ref{Proposition-3_2}~$i)$.

\begin{assertion}\label{Assertion-3_1}
A if $X$ is a POVS then $N_X=\{0\}^{(1)}_{ru}$.
\end{assertion}

\begin{proof}
Let $x\in X$. Then, $x\in N_X$ iff $\pm nx+\xi\in X_+$ for some $\xi\in X_+$ and all $n\in\mathbb{N}$ iff
$\pm n(0-x)\leqslant\xi$ for some $\xi\in X_+$ and all $n\in\mathbb{N}$ iff $x\in\{0\}^{(1)}_{ru}$.
\end{proof}

\medskip
The following definition is an adaptation of Definition \ref{Definition-2_2} (see, also \cite[Definition 5]{E-2017}) to the almost Archimedean setting.

\begin{definition}\label{Definition-3_1}
Let $X$ be a POVS and $\mathcal{C}^a_{Arch}(X)$ a category, whose objects are pairs $\left\langle Y,\phi\right\rangle$,
where $\phi:X\to Y$ is a positive linear operator to an almost Archimedean OVS $Y$, and morphisms
$\left\langle Y_1,\phi_1\right\rangle\to\left\langle Y_2,\phi_2\right\rangle$ are positive linear operators $\psi: Y_1\to Y_2$ satisfying
$\psi\circ\phi_1=\phi_2$. If $\left\langle Y_0,\phi_0\right\rangle$ is an initial object of $\mathcal{C}^a_{Arch}(X)$ 
then the OVS $Y_0$ is said to be {\color{blue}{almost Archimedeanization}} of $X$.
\end{definition}

One of the key ingredients in the proof of \cite[Theorem 1.1]{E-2017} is \cite[Proposition 3.4]{E-2017}
stating that, for every almost Archimedean OVS $X$ the OVS $(X,D_X)$ is Archimedean.
The proof of \cite[Proposition 3.4]{E-2017} has a gap and the existence of Archimedeanization of a POVS
is not proved in \cite{E-2017}. However, an inspection of the proof of \cite[Theorem 1.1]{E-2017} tells us that 
its modification provides a construction of almost Archimedeanization. This will be done in Theorem \ref{Theorem-3_1} below.

We need one more notion. Consider the following sets in a POVS $X$:\ $N_0(X)=\{0\}$ and, for every ordinal $\lambda>0$,  
$N_{\lambda}(X)=\bigl\{x\in X: [x]_{\bigcup_{{\g}<\lambda}N_{{\g}}(X)}\in N\bigl(X/_{\bigcup_{{\g}<\lambda}N_{{\g}}(X)}\bigl)\bigl\}$ 
the set of {\color{blue}{infinitesimals of $X$ of order $\lambda$}}, 
where $\lambda$ is an arbitrary ordinal $\geqslant 1$ (here, we use the straightforward fact that each $N_{\lambda}(X)$ as well as $\bigcup_{{\g}<\lambda}N_{{\g}}(X)$
is an order ideal in $X$). Since $N_{\lambda_1}(X)\subseteq N_{\lambda_2}(X)$ for $\lambda_1\leqslant\lambda_2$ then 
$N_{\lambda+1}(X)=N_{\lambda}(X)$ for $\lambda\geqslant\card(X)$. Therefore, there exists
the first ordinal, denoted by $\lambda_X$, such that $N_{\lambda_X+1}(X)=N_{\lambda_X}(X)$.
We call $\lambda_X$ the {\color{blue}{almost Archimedeanization type}} of $X$ (cf., \cite{E-2017,E-2023}).

\begin{lemma}\label{Lemma-6}
Let $T$ be a positive linear operator from a POVS $X$ to a POVS $Y$. Then, $T(N_{\lambda}(X))\subseteq N_{\lambda}(Y)$ for every ordinal ${\g}$.
\end{lemma}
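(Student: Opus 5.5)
We argue by transfinite induction on the ordinal $\lambda$ (the statement's ``every ordinal ${\g}$'' should be read as ``every ordinal $\lambda$''). Write $M_\lambda(X):=\bigcup_{{\g}<\lambda}N_{\g}(X)$, and similarly $M_\lambda(Y)$. The base case $\lambda=0$ is trivial, since $T(0)=0$. The main ingredient is the following observation, which we verify first.

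\emph{Base observation.} A positive linear operator $S:(U,U_+)\to(V,V_+)$ maps $N_U$ into $N_V$. Indeed, let $x\in N_U$, so that $\pm nx+\xi\in U_+$ for some $\xi\in U_+$ and all $n$. Applying $S$ gives $\pm nSx+S\xi\in V_+$ with $S\xi\in V_+$, hence $Sx\in N_V$.

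\emph{Inductive step.} Assume $T(N_{\g}(X))\subseteq N_{\g}(Y)$ for all ${\g}<\lambda$. Then $T(M_\lambda(X))\subseteq M_\lambda(Y)$. Consider the quotients $X/M_\lambda(X)$ and $Y/M_\lambda(Y)$, ordered by the quotient pre-orders, i.e.\ by $[X_+]$ and $[Y_+]$. These are legitimate quotients because $M_\lambda$ is an order ideal, as the paper notes.

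We define the induced map $\tilde T[x]:=[Tx]$. It is well defined because $T(M_\lambda(X))\subseteq M_\lambda(Y)$, and it is clearly linear. It is also positive. If $[x]\geqslant[0]$, then $x+a\in X_+$ for some $a\in M_\lambda(X)$. Hence $Tx+Ta\in Y_+$ with $Ta\in M_\lambda(Y)$, so $[Tx]\geqslant[0]$. The same reasoning shows that the positive wedge of the quotient is exactly $[X_+]$, which is the wedge used in the definition of $N$ of the quotient.

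Now let $x\in N_\lambda(X)$, i.e.\ $[x]\in N\bigl(X/M_\lambda(X)\bigr)$. Applying the base observation to $\tilde T$ gives $\tilde T[x]=[Tx]\in N\bigl(Y/M_\lambda(Y)\bigr)$, which means precisely that $Tx\in N_\lambda(Y)$. This single argument covers successor and limit $\lambda$ uniformly, since $N_\lambda$ is defined via the union $M_\lambda$ in both cases.

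The only delicate point is checking that the quotient pre-order is compatible with the definition of infinitesimals in the quotient, and that the induced map is well defined. The latter relies on the induction hypothesis applied to all ${\g}<\lambda$ simultaneously, which is why we use strong transfinite induction rather than a successor/limit split.
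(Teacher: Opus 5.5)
Your proof is correct. It uses the same core observation as the paper: a positive operator preserves the inequalities $\pm n[x]\leqslant[u]$ that witness infinitesimality in the quotient. You package this as a single strong-induction step through the induced positive operator $\tilde T\colon X/M_\lambda(X)\to Y/M_\lambda(Y)$, whereas the paper splits into successor and limit cases.

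The difference matters at limit ordinals. The paper's limit case starts from ``$x\in N_\gamma(X)$ implies $x\in N_\alpha(X)$ for some $\alpha<\gamma$'', i.e.\ it treats $N_\gamma(X)$ as $\bigcup_{\alpha<\gamma}N_\alpha(X)$. That does not follow from the stated definition. Under it, $N_\gamma(X)$ consists of the elements that are infinitesimal modulo that union, and it can be strictly larger. The examples of Section 4 depend on this: the proof of Theorem~\ref{Theorem-4_1} gives $N_\omega(V_\omega)=V_\omega\neq\bigcup_{n<\omega}V_n$.

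Your uniform step handles this case exactly as the definition requires. It also makes explicit the use of the induction hypothesis, namely the well-definedness of $[x]\mapsto[Tx]$, which the paper's successor step uses only tacitly. So your route is the more faithful of the two.
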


\begin{proof}
Trivially, $T(N_{0}(X))\subseteq N_{0}(Y)$.
Let $x\in N_{{\a}+1}(X)$. Then $\pm n[x]_{N_{{\a}}(X)}\leqslant [u]_{N_{{\a}}(X)}$ for some $u\in X_+$ and all $n\in\mathbb{N}$, 
and hence $\pm n[Tx]_{N_{{\a}}(Y)}\leqslant [Tu]_{N_{{\a}}(Y)}$ for $n\in\mathbb{N}$. So, $Tx\in N_{{\a}+1}(Y)$. 
If ${\g}$ is a limit ordinal and $x\in N_{{\g}}(X)$ then $x\in N_{{\a}}(X)$ for some ${\a}<{\g}$. 
Thus, $\pm n[x]_{N_{{\a}}(X)}\leqslant [u]_{N_{{\a}}(X)}$ for some $u\in X_+$ and all $n\in\mathbb{N}$, and hence
$\pm n[Tx]_{N_{{\a}}(Y)}\leqslant [Tu]_{N_{{\a}}(Y)}$ for all $n\in\mathbb{N}$. It follows $Tx\in N_{{\a}+1}(Y)\subseteq N_{{\g}}(Y)$.
\end{proof}

\begin{theorem}\label{Theorem-3_1}
Every POVS possesses a unique, up to an order isomorphism, almost Archime\-deanization.
\end{theorem}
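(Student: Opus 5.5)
Uniqueness is the standard argument for initial objects. If $\langle Y_0,\phi_0\rangle$ and $\langle Y_1,\phi_1\rangle$ are both initial in $\mathcal{C}^a_{Arch}(X)$, there are positive morphisms $\psi:Y_0\to Y_1$ and $\psi':Y_1\to Y_0$ with $\psi\circ\phi_0=\phi_1$ and $\psi'\circ\phi_1=\phi_0$. Then $\psi'\circ\psi$ is a morphism $\langle Y_0,\phi_0\rangle\to\langle Y_0,\phi_0\rangle$, so by initiality it equals the identity; likewise $\psi\circ\psi'=I$. Hence $\psi$ is a positive bijection with positive inverse, i.e.\ an order isomorphism. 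Since this uses initiality, I have to check that morphisms out of the constructed object are unique. This holds once $\phi_0$ is surjective, which will be the case for a quotient map.

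For existence I take $N:=N_{\lambda_X}(X)$, the stabilized ideal of infinitesimals. I set $Y_0:=X/N$ with positive wedge $[X_+]$ and $\phi_0$ the quotient map. The map $\phi_0$ is positive and surjective.

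Next I check that $Y_0$ is an almost Archimedean OVS. By definition $N_{\lambda_X+1}(X)$ is the set of $x$ with $[x]_N\in N(X/N)$. The equality $N_{\lambda_X+1}(X)=N_{\lambda_X}(X)$ then says exactly that $N(X/N)=\{[0]\}$, i.e.\ $N_{X/N}=\{0\}$. By the remark after the definition of $N_X$, this is equivalent to $X/N$ being almost Archimedean. Almost Archimedean wedges are cones, so $[X_+]$ is a cone. Here I rely on the paper's remark that $N_{\lambda}(X)$ is an order ideal, so the quotient preorder is as described in Section~1. I also need the identification of the quotient of $X$ by $\bigcup_{\gamma\le\lambda_X}N_\gamma$ with $X/N$, which holds because the chain is increasing.

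For the universal property, take an object $\langle Y,\phi\rangle$. By Lemma~\ref{Lemma-6}, $\phi(N_{\lambda_X}(X))\subseteq N_{\lambda_X}(Y)$. Since $Y$ is almost Archimedean, $N_Y=\{0\}$, and I prove $N_\lambda(Y)=\{0\}$ for all $\lambda$ by transfinite induction. If $\bigcup_{\gamma<\lambda}N_\gamma(Y)=\{0\}$, then the quotient by it is $Y$ itself, so $N_\lambda(Y)=N_Y=\{0\}$. Hence $\phi(N)=0$, and $T[x]:=\phi(x)$ is a well-defined linear map on $X/N$. It is positive, since $[x]\ge0$ means $x+a\in X_+$ for some $a\in N$, giving $\phi(x)=\phi(x+a)\ge0$. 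Moreover $T\circ\phi_0=\phi$, and $T$ is unique by surjectivity of $\phi_0$.

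The main obstacle I expect is verifying that $N_{X/N}=\{0\}$ really follows from the stabilization. This requires the recursive definition at the stage $\lambda_X+1$ to be quotienting by exactly $N_{\lambda_X}(X)$, and the order on $X/N$ to be the quotient preorder induced from $X_+$. I would write out carefully that $\pm n[x]\le[u]$ in $X/N$ with $[u]\in[X_+]$ unwinds to $x\in N_{\lambda_X+1}(X)$.
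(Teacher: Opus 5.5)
Your proof is correct and is essentially the paper's: quotient by the stabilized ideal $N_{\lambda_X}(X)$, read off $N_{X/N_{\lambda_X}(X)}=\{0\}$ from $N_{\lambda_X+1}(X)=N_{\lambda_X}(X)$, and factor an arbitrary positive $\phi$ through the quotient map using Lemma~\ref{Lemma-6} and $N_{\lambda_X}(Y)=\{0\}$. You are more thorough than the paper, which leaves implicit the transfinite induction giving $N_\lambda(Y)=\{0\}$, the uniqueness of the induced map (your surjectivity remark), and the standard argument that initial objects are unique up to order isomorphism.
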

 
\begin{proof}
Let $X$ be a POVS. By the definition of $\lambda_X$, $(X/N_{\lambda_X}(X),[X_+]_{N_{\lambda_X}(X)})$ is an almost Archimedean OVS. 
Denote by $p_X$ the quotient map $X\to X/N_{\lambda_X}(X)$. Since $N_{\lambda_X}(X)$ is an order ideal in $X$, $p_X$ is positive.
Let $(Y,Y_+)$ be an almost Archimedean OVS and $\phi:X\to Y$ a positive linear operator. By Lemma \ref{Lemma-6}, 
$\phi(N_{\lambda_X}(X))\subseteq N_{\lambda_X}(Y)$. Since $Y$ is almost Archimedean, $N_{\lambda_X}(Y)=\{0\}$.
So, a mapping $\tilde{\phi}:X/N_{\lambda_X}(X)\to Y$ 
such that $\tilde{\phi}([x]_{N_{\lambda_X}})=\phi(x)$ is a well defined positive linear operator satisfying $\tilde{\phi}\circ p_X=\phi$. 
Thus, $\left\langle(X/N_{\lambda_X}(X),[X_+]_{N_{\lambda_X}(X)}),p_X\right\rangle$ is an initial object of $\mathcal{C}^a_{Arch}(X)$,
and hence the OVS $(X/N_{\lambda_X}(X),[X_+]_{N_{\lambda_X}(X)})$ is an almost Archimedization of $X$.
\end{proof}

It is mistakenly claimed in \cite[Proposition 3.4]{E-2017} that $(X,D_X)=(X,(X_+)^{(1)}_{ru})$ is an Archimede\-an POVS,
whenever $X$ is an almost Archimedean OVS. Under the assumption that $X$ almost Archimedean, 
we can only say $\big(X,\overline{(X_+)}_{ru}\big)$ is Archimedean.
More precisely, we have the following corollary of Theorem \ref{Theorem-2_1}.

\begin{corollary}\label{Corollary-3_1}
If $X$ is a POVS then $\big(X,\overline{(X_+)}_{ru}\big)$ is an Archimedean POVS.
\end{corollary}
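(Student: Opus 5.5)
The plan is to verify the Archimedean property of the POVS $\big(X,W\big)$, where $W=\overline{(X_+)}_{ru}$ is taken with respect to the original wedge $X_+$. Two routes are available, and I will follow the direct one.

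First, by Lemma \ref{Lemma-2_3}~$ii)$, $W$ is a wedge, so $(X,W)$ is a POVS. Take $y\in X$ and $u\in W$ with $ny\leqslant_W u$ for all $n\in\mathbb{N}$. The goal is $y\leqslant_W 0$, i.e.\ $-y\in W$. Dividing by $n$ and using that $W$ is a wedge gives $\frac{1}{n}u-y\in W$ for every $n$.

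Second, I will show that $\frac{1}{n}u\convr 0$ in the original space $(X,X_+)$. Since $X_+\subseteq X_+-X_+$, Lemma \ref{Lemma-2_2}~$iii)$ gives $W\subseteq X_+-X_+$. Write $u=a-b$ with $a,b\in X_+$ and put $v:=a+b\in X_+$. Then $\pm u\leqslant v$, so $\pm n\bigl(\tfrac{1}{n}u-0\bigr)\leqslant v$ for all $n$. Hence $\frac{1}{n}u-y\convr -y$ in $(X,X_+)$ with regulator $v$.

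Third, $W$ is $\tau_{ru}$-closed, being a $\tau_{ru}$-closure in $(X,X_+)$. Since the sequence $\bigl(\frac{1}{n}u-y\bigr)$ lies in $W$ and ru-converges to $-y$, we get $-y\in W$. This is exactly $y\leqslant_W 0$, so $(X,W)$ is Archimedean.

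Alternatively, one can pass through Theorem \ref{Theorem-2_1}~$i)$. The quotient map $X\to X/A$ with $A=W\cap -W$ turns $ny\leqslant_W u$ into $n[y]\leqslant_{[W]}[u]$, so $[y]\leqslant_{[W]}0$. Hence $-y+a\in W$ for some $a\in A$, and then $-y=(-y+a)+(-a)\in W+W\subseteq W$.

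The only delicate point is to keep the ru-convergence in $(X,X_+)$ rather than in $(X,W)$: closedness of $W$ is known only for the topology $\tau_{ru}$ of $(X,X_+)$. This is why the step $W\subseteq X_+-X_+$, which supplies a regulator in $X_+$, is essential.
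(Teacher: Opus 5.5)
Your proof is correct. Your alternative route is exactly the paper's argument. The paper passes to the quotient $(X/A,[W])$ with $A=W\cap-W$, applies Theorem~\ref{Theorem-2_1}~$i)$ to get $[x]\in-[W]$, and lifts back via $x+A\subseteq -W+A=-W$.

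Your primary route skips the quotient and instead reproduces the core of the proof of Theorem~\ref{Theorem-2_1}~$i)$ directly in $X$. The inclusion $W\subseteq X_+-X_+$ from Lemma~\ref{Lemma-2_2}~$iii)$ supplies a regulator $v\in X_+$. Then closedness of $W$ under sequential ru-limits in $(X,X_+)$ gives $-y\in W$. You use the sequence $\frac{1}{n}u-y\convr -y$ inside $W$, whereas the paper uses $\frac{1}{n}y\convr 0$ inside the translate $x+W$ together with translation invariance of $\tau_{ru}$; this difference is inessential.

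The paper's route is a two-line deduction once Theorem~\ref{Theorem-2_1} is available. Yours is self-contained and shows that the quotient plays no role in the Archimedean property. With your argument in hand, one could even reverse the paper's order of deduction and obtain Theorem~\ref{Theorem-2_1}~$i)$ from the corollary. Since $A\subseteq W$, an inequality $n[x]\leqslant_{[W]}[y]$ with $[y]\in[W]$ lifts to $nx\leqslant_W y\in W$, and the cone property of $[W]$ comes from Lemma~\ref{Lemma-2_1}.

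Your remark that the convergence must be taken in $(X,X_+)$ rather than in $(X,W)$ is exactly the point the paper also flags.
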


\begin{proof}
Let $X$ be a POVS and $nx\leqslant_{\overline{(X_+)}_{ru}}y$ for some $x\in X$, $y\in \overline{(X_+)}_{ru}$, and all $n\in\mathbb{N}$. Then,
$n[x]\leqslant_{\big[\overline{(X_+)}_{ru}\big]}[y]$ for all $n\in\mathbb{N}$ 
in the POVS $\big(X/\big(\overline{(X_+)}_{ru}\cap-\overline{(X_+)}_{ru}\big),\big[\overline{(X_+)}_{ru}\big]\big)$. 
By Theorem \ref{Theorem-2_1}, $[x]\in-\big[\overline{(X_+)}_{ru}\big]$, and hence 
$$
   x+\overline{(X_+)}_{ru}\cap-\overline{(X_+)}_{ru}\subseteq-\overline{(X_+)}_{ru}+\big(\overline{(X_+)}_{ru}\cap-\overline{(X_+)}_{ru}\big)=-\overline{(X_+)}_{ru}. 
$$
It follows $x\in -\overline{(X_+)}_{ru}$, as desired.
\end{proof}

\medskip
It can be seen easily that: $0\leqslant\lambda_X\leqslant{\w}_1$ for every POVS $X$; $\lambda_X=0$ if and only if $X$ is almost Archimedean;
$N_1\bigl(\ell^\infty/c_{00}\bigl)=N_1\bigl(c_0/c_{00}\bigl)=c_0/c_{00}$, and hence $N_2\bigl(\ell^\infty/c_{00}\bigl)=N_2\bigl(c_0/c_{00}\bigl)=\{0\}$
and $\lambda\bigl(\ell^\infty/c_{00}\bigl)=\lambda\bigl(c_0/c_{00}\bigl)=2$; and $\lambda_X=3$ for the vector lattice $X$ in Nakayama's example.

\section{Examples of OVSs and Vector Lattices whose almost Archimedeanization type is $\a$ for $0\leqslant\a\leqslant\w_1$.}

\hspace{4mm}
In this section, we present examples showing the variety of OVSs and vector lattices of finite and countable almost Archimedeanization types.
Recall that by $\lambda_V$ we denote the almost Archimedeanization type of a POVS $V$. 
In \cite{E-2017} the question was asked whether or not for any ordinal $\a$ there exists an OVS $V$ for which $\lambda_V=\a$?
It is easy to see that $\lambda_V\leqslant\w_1$ for each OVS $V$, since being infinitely small is an inherently countable condition.
On the other hand, we are going to show, as an explicit answer to the question, that it is possible to construct an OVS, and even a lattice, 
$ V $ such that $\lambda_V=\a$ for any $0\leqslant\a\leqslant\w_1$. We begin with an example of an OVS, since it is slightly easier to observe, 
while having most of the important features necessary for the construction.

\medskip
Let $\mathcal{P}$ be the vector space of all {\color{blue}{real polynomials}}. Consider an ordering $\leqslant_0$ on $\mathcal{P}$ given by
$$
   0 \leqslant_0 p \iff ( \forall n \in \mathbb{N} )( p(n) \geqslant 0 ).
$$
One can immediately check that there are no non-zero infinitely small elements and thus $\lambda_{(\mathcal{P}, \leqslant_0)}=0$.
Now let an ordering $\leqslant_1$ on $\mathcal{P}$ be given by
$$
   0 \leqslant_1 p \iff \text{leading coefficient of } p \text{ is non-negative}.
$$
In this case every element becomes infinitely small and hence $\lambda_{(\mathcal{P}, \leqslant_1)}=1$.

\begin{example}\label{Example-4_1}
For any countable limit ordinal $ \a $ fix a strictly increasing cofinal sequence $ A_\a $, i.e. a mapping $ A_\a : \mathbb{N} \to \a $ such that
$$
   A_\a(n)<A_\a(m) < \a \text{ for any }  n < m \text{ and } \lim_{n \to \infty} A_\a(n) = \a. 
$$
Such sequences can easily be constructed from any bijection between $\a$ and $\mathbb{N}$.

\smallskip
Consider vector spaces
$$
   V_\a = \bigoplus_{1 \leqslant \b \leqslant \a} \mathcal{P} \ \ \ \ (1 \leqslant \a < \w_1).
$$
Given an element $ p \in V_\a $ let $ p_\b $ denote its projection onto the $ \b $-th direct summand for any $ 1 \leqslant \b \leqslant \a $.
Turn each $ V_\a $ for $ \a < \w_1 $ into an OVS $(V_\a, \leqslant_\a)$ by an ordering given by
$$
	0 \leqslant_\a p \iff
	( \forall \b \leqslant \a )
	( 0 \leqslant_1 p_\b \text{ and } ( \forall n \geqslant n_\b(p) )( p_\b(n) \geqslant 0 ) ),
$$
where
$$
	n_\b(p) =
	\begin{cases}
		\infty  & \text{ if } \b = 1,\\
		\deg p_\g, & \text{ if } \b = \g + 1 > 1,\\
		\min\{n : p_{A_\b(n)} = 0\}, & \text{ if } \b \text{ is a limit ordinal}.
	\end{cases}
$$
WLOG, we assume that $(V_\a, \leqslant_\a)$ is a subspace of $(V_\b, \leqslant_\b) $ for any $ \a < \b $.
\end{example}

\begin{theorem}\label{Theorem-4_1}
$\lambda_{V_\a} = \a $ for any $ 1 \leqslant \alpha < \w_1 $.
\end{theorem}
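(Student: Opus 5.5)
The plan is to identify the chain of infinitesimal ideals explicitly. For $0\leqslant\g\leqslant\a$ put
$$
U_\g:=\{p\in V_\a:\ p_\b=0 \text{ for all } \b>\g\},
$$
so that $U_0=\{0\}$ and $U_\a=V_\a$. I will prove by transfinite induction on $\g\leqslant\a$ that $N_\g(V_\a)=U_\g$.

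Granting this, the chain $U_\g$ is strictly increasing for $\g\leqslant\a$, because the $(\g+1)$-st summand is nonzero. Hence $N_{\g+1}\neq N_\g$ for every $\g<\a$. On the other hand $N_{\a+1}\supseteq N_\a=V_\a$ already stabilizes. So the first ordinal $\lambda$ with $N_{\lambda+1}=N_\lambda$ is $\a$, that is, $\lambda_{V_\a}=\a$. At limit $\g$ the claimed identity is immediate from the definition, since $\bigcup_{\b<\g}U_\b=U_\g$ on a finitely supported direct sum. So everything reduces to the successor step: in the quotient $V_\a/U_\g$, the infinitesimals are exactly $[U_{\g+1}]$. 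Throughout I use that adding an element of $U_\g$ is free in the quotient. Concretely, $[x]\in N(V_\a/U_\g)$ iff there is $\xi\geqslant_\a 0$ such that for each $n$ some $y_n\in U_\g$ gives $\pm nx+\xi+y_n\geqslant_\a 0$.

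For the inclusion $U_{\g+1}\subseteq N_{\g+1}$, take $x$ whose only coordinate beyond $\g$ is $x_{\g+1}$.
\begin{enumerate}[$(a)$]
\item I choose $\xi$ supported in coordinate $\g+1$ with $\xi_{\g+1}(t)=t^{d}$, where $d>\deg x_{\g+1}$. Then $\pm nx_{\g+1}+\xi_{\g+1}$ has positive leading coefficient, and it is $\geqslant 0$ at all integers $m\geqslant M_n$ for some threshold $M_n$.
\item I then use $y_n\in U_\g$ to push the threshold $n_{\g+1}$ above $M_n$, while keeping coordinates $\leqslant\g$ positive. If $\g=\b+1$ or $\g=1$ is a successor, I give coordinate $\g$ a polynomial $t^{K}$ with $K\geqslant M_n$ and a positive leading coefficient, dominating $\pm nx_\g+\xi_\g$ on $\mathbb N$. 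If $\g$ is a limit, $\g+1$ is a successor with $n_{\g+1}=\deg p_\g$, so the same trick in coordinate $\g$ works.
\item Lower coordinates of $\pm nx$ are handled the same way: add large monomials in every coordinate $\leqslant\g$ where $x$ is nonzero. For limit indices $\b\leqslant\g$, make the coordinates $A_\b(1),\dots,A_\b(k)$ nonzero so that $n_\b$ exceeds the thresholds needed at $\b$.
\item Coordinates above $\g+1$ are zero and impose no condition.
\end{enumerate}

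For the reverse inclusion, let $x$ have a nonzero coordinate $x_\b$ with $\b>\g+1$, and take $\b$ maximal. Suppose $\xi\geqslant_\a 0$ and $y_n\in U_\g$ witness infinitesimality. The leading-coefficient condition for all $\pm n$ forces $\deg\xi_\b>\deg x_\b$. The key point is that the threshold $n_\b(\pm nx+\xi+y_n)$ stays bounded independently of $n$.
\begin{enumerate}[$(a)$]
\item If $\b=\delta+1$, then $\delta>\g$, so $y_n$ does not touch coordinate $\delta$. The degree of $\pm nx_\delta+\xi_\delta$ is at most $\max(\deg x_\delta,\deg\xi_\delta)$.
\item If $\b$ is a limit, then by finite support I pick $k_0$ with $A_\b(k_0)>\g$ where both $x$ and $\xi$ vanish, which gives $n_\b\leqslant k_0$.
\end{enumerate}
Then for each fixed integer $m$ beyond this bound, $\pm nx_\b(m)+\xi_\b(m)\geqslant 0$ holds for all $n$, so $x_\b(m)=0$. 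This holds for infinitely many $m$, hence $x_\b=0$, a contradiction. The same argument also shows that the elements with nonzero coordinate $\g+1$ are not in $N_\g$, which gives strictness.

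I expect the main obstacle to be the bookkeeping in the first inclusion. The correcting element $y_n$ must depend on $n$ while $\xi$ does not, and the thresholds $n_\b$ for limit $\b\leqslant\g$ must be pushed up without breaking the positivity conditions at other coordinates. I would handle this by descending through the finitely many nonzero coordinates of $x$ and choosing huge monomials from the top down.
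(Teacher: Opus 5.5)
Your overall strategy matches the paper's. You prove $N_\g(V_\a)=U_\g$ by induction, push thresholds up with correcting elements of the ideal, and use a bounded-threshold argument for the reverse inclusion. The successor step and the final counting are sound. The gap is the limit step.

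For a limit $\g$, the identity $\bigcup_{\b<\g}U_\b=U_\g$ is false. The union, call it $U_{<\g}$, consists of the elements vanishing at all coordinates $\geqslant\g$, so it misses the entire $\g$-th summand. Moreover, the paper defines $N_\lambda(X)$ for every $\lambda>0$, limits included, as the preimage of the infinitesimals of $X/\bigcup_{\b<\lambda}N_\b(X)$, not as the union. So at a limit $\g$ you must prove that the infinitesimals of $V_\a/U_{<\g}$ are exactly $[U_\g]$. That is a genuine statement about the limit clause $n_\g(p)=\min\{k:p_{A_\g(k)}=0\}$, and it is nowhere in your write-up; your item $(c)$ uses that clause only for bookkeeping at lower coordinates. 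The point is not cosmetic. If limit stages were mere unions, your own successor step would give $N_{\g+1}=U_\g$ right after each limit, the indexing would drift, and e.g.\ $\lambda_{V_\w}$ would come out as $\w+1$.

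To close the gap, write $q\,e_\b$ for the element whose only nonzero coordinate is $q$ at place $\b$.

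For $[U_\g]\subseteq N(V_\a/U_{<\g})$, it suffices to treat $x=q\,e_\g$, since $x-x_\g e_\g\in U_{<\g}$. Take $\xi=t^d e_\g$ with $d>\deg q$, and pick $M_n$ with $\pm nq(m)+m^d\geqslant 0$ for both signs and all $m\geqslant M_n$. Let $y_n\in U_{<\g}$ be the constant polynomial $1$ at the coordinates $A_\g(k)$, $k\leqslant M_n$, and $0$ elsewhere. Then $n_\g(\pm nx+\xi+y_n)=M_n+1$, and every coordinate satisfies the positivity condition.

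For the reverse inclusion, your bounded-threshold argument works verbatim with $y_n\in U_{<\g}$, because such $y_n$ vanish at every coordinate $\geqslant\g$. If the maximal nonzero coordinate is $\b=\delta+1>\g$, then $\delta\geqslant\g$. If it is a limit $\b>\g$, choose $k_0$ with $A_\b(k_0)\geqslant\g$ and with $x,\xi$ vanishing there.

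Incidentally, the bookkeeping you flag as the main obstacle is unnecessary. Since $[x]=[x_{\g+1}e_{\g+1}]$ in $V_\a/U_\g$, for $\g\geqslant 1$ you can simply take $y_n^{\pm}=\mp n(x-x_{\g+1}e_{\g+1})+t^{K_n}e_\g$ with $K_n\geqslant M_n$.
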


\begin{proof}
Explicitly writing $ N_\b (V_\a) $,  we get $N_\b(V_\a) = V_\b$ for any $1 \leqslant \b \leqslant \a $.
It is obviously true for $ \b = 1 $, since the first component is precisely $ (\mathcal{P}, \leqslant_1) $, and if $ p $ 
has non-zero coordinate $ p_\g $ for some $ \g > 1 $, then for any $ q \in V_\a $ and any $ n \in \mathbb{N} $ 
there is a large enough number $ m \in \mathbb{N} $ such that either $ (q_\g-mp_\g)(n) < 0 $ or $ (-q_\g-mp_\g)(n) > 0 $, 
so either $ mp \not<_\a q $ or $ -q \not<_\a mp $.

\medskip
If it is shown for every $ \g < \b $ then relations on components $ p_\g $ become irrelevant and every element $ p $, 
such that $ p_\g = 0 $ for all $ \g > \b $, lies in $ N_\b(V_\a) $, as the order on the $ \b $-th component collapses 
into just $ \leqslant_1 $ since we can exclude arbitrarily many values $ p_\beta(n) $ from the consideration 
by tampering with components on indices $ \g < \b $. On the other hand, again, if $ p_\g \neq 0 $ for some $ \g > \b $ 
then for any $ q \in V $ there is a bound $ k \in \mathbb{N} $ such that we can't exclude values in $ n > k $ from the consideration, 
but then again there is a large enough number $ m \in \mathbb{N} $ such that either $ (q_\g-mp_\g)(n) < 0 $ or $ (-q_\g-mp_\g)(n) > 0 $, 
which shows that $ p \notin N_\b(V_\a) $.
From this explicit inductive form of $ N_\b(V_\a) $ it immediately follows $\lambda_{V_\a} = \a $ for any $ \a < \w_1 $.
\end{proof}

\noindent
Now take a colimit $V_{\w_1}=\bigoplus_{1 \leqslant \b < \w_1} \mathcal{P}$ of $V_\a $, with the ordering $ \leqslant_{\w_1}$ from the colimit, i.e.
$$
   0 \leqslant_{\w_1} p \iff ( \forall \b < \w_1 ) ( 0 \leqslant_1 p_\b \text{ and } ( \forall n \geqslant n_\b(p) )( p_\b(n) \geqslant 0 ) ).
$$

\begin{corollary}\label{Corollary-4_1}
$\lambda_{V_{\w_1}} = \w_1 $.
\end{corollary}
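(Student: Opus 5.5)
The plan is to compute the whole chain $N_\b(V_{\w_1})$ explicitly and show that $N_\b(V_{\w_1})=V_\b$ for every $1\leqslant\b<\w_1$, where $V_\b$ is identified, as agreed in Example \ref{Example-4_1}, with the subspace of $V_{\w_1}$ of elements $p$ with $p_\g=0$ for all $\g>\b$. Once this is established, the chain $(N_\b(V_{\w_1}))_{\b<\w_1}$ is strictly increasing. Hence $N_{\b+1}(V_{\w_1})\neq N_\b(V_{\w_1})$ for every $\b<\w_1$, so $\lambda_{V_{\w_1}}\geqslant\w_1$. On the other hand, $\lambda_X\leqslant\w_1$ holds for every POVS, as noted after Corollary \ref{Corollary-3_1}. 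A direct check confirms this here: every $p\in V_{\w_1}$ has finite support, hence lies in some $V_\b$ with $\b<\w_1$. So
$$N_{\w_1}(V_{\w_1})=\bigcup_{\b<\w_1}N_\b(V_{\w_1})=V_{\w_1},$$
and therefore $N_{\w_1+1}(V_{\w_1})=N_{\w_1}(V_{\w_1})$, which gives $\lambda_{V_{\w_1}}=\w_1$.

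The key step is the identity $N_\b(V_{\w_1})=V_\b$. I would prove it by transfinite induction on $\b$, reusing the argument of Theorem \ref{Theorem-4_1}. The crucial observation is that the positivity condition defining $\leqslant_{\w_1}$ on a coordinate $\g$ involves only the coordinates with indices $<\g$, through $n_\g(p)$. Consequently, for $p,q\in V_{\w_1}$ the relations $\pm np\leqslant_{\w_1}q$ are decided coordinatewise on the finite set $\mathrm{supp}(p)\cup\mathrm{supp}(q)$. They can therefore be checked inside a single $V_\a$, with $\a$ countable and exceeding $\b$ and both supports. Moreover, positivity in $V_{\w_1}$ of an element of $V_\a$ coincides with positivity in $V_\a$, since the definitions agree on those coordinates.

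With this, two inclusions remain. For $V_\b\subseteq N_\b(V_{\w_1})$: Theorem \ref{Theorem-4_1} gives $V_\b=N_\b(V_\a)$, and witnesses $q\in V_\a$ remain witnesses in $V_{\w_1}$. The quotient orders must be compared at each stage, and this is exactly what the induction hypothesis $N_\g(V_{\w_1})\cap V_\a=N_\g(V_\a)$ for $\g<\b$ provides. For the reverse inclusion, suppose $p\notin V_\b$, and suppose a witness $q\in V_{\w_1}$ existed modulo $\bigcup_{\g<\b}N_\g(V_{\w_1})$. Choose $\a$ large enough to contain the supports of $p$ and $q$. Then the witness lives in $V_\a$, which contradicts $p\notin N_\b(V_\a)$.

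The main obstacle I expect is making the ``locality'' claim rigorous at limit stages and on quotients. One must show that adjusting the lower-index coordinates by elements of $\bigcup_{\g<\b}N_\g$ can always be done inside some countable $V_\a$. I would handle this by noting that any single element, and any countable family of elements, has countable support, so it always fits in some $V_\a$ with $\a<\w_1$. This is precisely the regularity of $\w_1$, which makes the colimit behave like the direct limit of the $V_\a$ at every countable stage.
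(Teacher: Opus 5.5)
Your proposal is correct and follows the argument the paper intends. The paper gives no separate proof: the corollary is read off from the computation $N_\beta=V_\beta$ in Theorem~\ref{Theorem-4_1}, carried over to the colimit. Your reduction to a single countable $V_\alpha$ makes that transfer explicit. Lemma~\ref{Lemma-6}, applied to the positive inclusion $V_\alpha\hookrightarrow V_{\omega_1}$, would also give $V_\beta\subseteq N_\beta(V_{\omega_1})$ at once.

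One small point about your displayed equality. In the paper's definition, $N_{\omega_1}(V_{\omega_1})$ is the set of elements whose class is infinitesimal in $V_{\omega_1}/\bigcup_{\beta<\omega_1}N_\beta(V_{\omega_1})$; it is not the union itself. So $N_{\omega_1}(V_{\omega_1})=\bigcup_{\beta<\omega_1}N_\beta(V_{\omega_1})$ is not true by definition. Here that union is already all of $V_{\omega_1}$, so your conclusion $N_{\omega_1+1}(V_{\omega_1})=N_{\omega_1}(V_{\omega_1})=V_{\omega_1}$ still holds.
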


\bigskip
Next, we present examples in the vector lattice setting. For any ordinal $ \a > 1 $ fix a function $ \a : \mathbb{N} \to \a $ such that
$$
	\a(n) =
	\begin{cases}
		\b, & \text{ if } \a = \b + 1 > 1,\\
		A_\a(n), & \text{ if } \a \text{ is a limit ordinal}.
	\end{cases}
$$
where $ A_\a $ is an increasing sequence from Example \ref{Example-4_1}.

\medskip
Consider the following recursive family of spaces $ S_\a $ and their fixed elements $ 1_\a $:
$$
\begin{gathered}
	S_1 = \mathbb R \quad\text{ and }\quad 1_1 = 1,
	\\
	S_\a = \mathcal{P}1_\alpha + \bigoplus_{n \in \mathbb N} S_{\a(n)} \subset \prod_{n \in \mathbb N} S_{\a(n)}
	\quad\text{ and }\quad
	1_\a = \prod_{n \in \mathbb N} 1_{\a(n)},
\end{gathered}
$$
whose elements we will regard as sequences $ x_n $, with the recursive pointwise order
$$
   0 \leqslant^\a x \iff (\forall n \in \mathbb N)(0 \leqslant^{\a(n)} x_n).
$$
So any element $ x \in S_\a $ is a sequence of elements $ x_n \in S_{\a(n)} $ such that for some (unique) 
$ p \in \mathcal{P}$ we have $ x_n = p(n)1_{\a(n)} $ for all but finitely many $ n \in \mathbb N $. If necessary, we denote such $ p $ by $ p(x) $.

\noindent
Note that by definition each $ S_\a $ is a subspace of $ \mathbb R^{I_\a} $ for some $ I_\a $, 
moreover, from definition $ I_\a = \coprod I_{\a(n)} $, so $ I_n \simeq \mathbb N^{n-1} $, 
and $ I_\a \simeq \mathbb N^{<\w} $ for all $ \w \leqslant \a < \w_1 $. And also note, that $ 0 \leqslant^\a x \iff (\forall i \in I_\a)(0 \leqslant x(i)) $.

\medskip
Consider a family of subspaces $ H_\a^\b \subseteq S_\a $ such that
$$
	H_\a^\b =
	\begin{cases}
		\bigoplus\limits_{n \in \mathbb N} H_{\a(n)}^\b, & \text{ if } \b < \a, \\
		S_\a, & \text{ if } \b \geqslant \a.
	\end{cases}
$$
Note that $ H_\a^1 $ is precisely $c_{00}(I_\a)$, and that $ H_\a^\g \subset H_\a^\b $ for all $ \g < \b \leqslant \a $.
Let's also, for simplicity, denote $ H_\a^{<\b} = \bigcup\limits_{\g < \b} H_\a^\g $.

\bigskip
Consider spaces $V_\a = H_\a^1 \times H_\a^\a$ with the pointwise lexicographic order:
$$
	0 \leqslant (x, y) \iff (\forall i \in I_\a)(0 < x(i) \text{ or } (0 = x(i) \text{ and } 0 \leqslant y(i))).
$$
Note that on $ H_\a^1 \times 0 $ and $ 0 \times H_\a^\a $ this is exactly the order we've already had from $ S_\a $.

\begin{proposition}\label{Proposition-4_1}
Each $ V_\a $ is a vector lattice.
\end{proposition}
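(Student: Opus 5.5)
The plan is to reduce everything to a coordinatewise computation. The order on $V_\a=H_\a^1\times H_\a^\a$ is the restriction of the product order on $\prod_{i\in I_\a}\mathbb{R}^2_{lex}$. Here we regard $(x,y)$ as the family $\bigl((x(i),y(i))\bigr)_{i\in I_\a}$. Since $\mathbb{R}^2_{lex}$ is a totally ordered vector space, the product is a vector lattice with coordinatewise lattice operations. The order on $V_\a$ is a cone: if $\pm(x,y)\geqslant 0$, then at every $i$ the pair $(x(i),y(i))$ is $\geqslant 0$ and $\leqslant 0$ in the lex order, hence vanishes. So it suffices to show that $V_\a$ is closed under the coordinatewise positive part $(x,y)\mapsto (x,y)\vee 0$. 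Then this element is the supremum of $\{(x,y),0\}$ in the product, and a fortiori in the subspace $V_\a$. Hence $|v|=v\vee 0+(-v)\vee 0$, and $v\vee w=w+(v-w)\vee 0$ exist in $V_\a$.

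First compute the positive part explicitly at a coordinate $i$. It equals $(x(i),y(i))$ if $x(i)>0$, it equals $(0,0)$ if $x(i)<0$, and it equals $(0,y(i)^+)$ if $x(i)=0$. Thus the first component is $x^+$, which lies in $c_{00}(I_\a)=H_\a^1$. The second component $y'$ coincides with $y^+$ outside the finite support $F$ of $x$. Hence $y'-y^+\in c_{00}(I_\a)=H_\a^1\subseteq H_\a^\a$. Everything therefore reduces to the claim that $H_\a^\b$ is closed under the pointwise positive part, in particular for $\b=\a$, i.e. for $S_\a$.

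This claim is proved by transfinite induction on $\a$, simultaneously for all $\b$. Along the way one records that $1_\a$ is the constant function $1$ on $I_\a$, which is immediate from $1_\a=\prod_n 1_{\a(n)}$. For $\a=1$ the claim is trivial.

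For $\b\geqslant\a$ we have $H_\a^\b=S_\a$. Take $x\in S_\a$ with $x_n=p(n)1_{\a(n)}$ for all $n>n_0$. Coordinatewise, $(x^+)_n=x_n^+$, and by the inductive hypothesis for $\a(n)<\a$ this lies in $S_{\a(n)}$. Moreover, a real polynomial has eventually constant sign. Hence for large $n$ we get $x_n^+=p(n)^+1_{\a(n)}=q(n)1_{\a(n)}$, where $q$ is either $p$ or $0$. So $x^+\in\mathcal{P}1_\a+\bigoplus_n S_{\a(n)}=S_\a$.

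For $\b<\a$ we have $H_\a^\b=\bigoplus_n H_{\a(n)}^\b$. Its elements have only finitely many nonzero blocks, each in $H_{\a(n)}^\b$. The positive part acts blockwise and preserves zero blocks, so it stays in the direct sum by the inductive hypothesis.

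The main obstacle I expect is this induction on $S_\a$. The rest is bookkeeping, but one must check two things carefully. First, that $1_{\a(n)}$ really is the all-ones function on $I_{\a(n)}$, so that $(p(n)1)^+=p(n)^+1$. Second, that the eventual-sign argument for polynomials gives a single $q\in\mathcal{P}$, rather than a different choice for different $n$. Once this is settled, the coordinatewise lex formula finishes the proof.
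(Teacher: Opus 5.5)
Your proposal is correct and follows essentially the same route as the paper. You first show that $S_\alpha$ is closed under coordinatewise positive parts, using that a polynomial has eventually constant sign, so that $x_n^+=q(n)1_{\alpha(n)}$ with $q\in\{p,0\}$ for large $n$. You then compute $(x,y)\vee 0$ coordinatewise in $\mathbb{R}^2_{lex}$ and note that its second component differs from $y^+$ only on the finite support of $x$, hence by an element of $H_\alpha^1$.

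Your case analysis is more accurate than the paper's formula. The paper's correction $\delta$ vanishes where $x(i)<0$, so it yields $(0,y(i)^+)$ there instead of the correct value $(0,0)$. Since such $i$ are finitely many, this does not affect the conclusion.
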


\begin{proof}
First, note that $ S_\a $ is a vector lattice. Naturally, $ (x \lor 0)_n = x_n \lor 0 $, so, for $ x_n = p(n)1_{\a(n)} $, 
if $ 0 \leqslant_1 p $ then $ (x \lor 0)_n = p(n)1_{\a(n)} $ for all but finitely many $ n $, 
where we can choose it recursively, and if $ 0 >_1 p $ then $ (x \lor 0)_n = 0 $ for all but finitely many $ n $.
\\
Now, $ 0 \lor (x, y) = (0 \lor x, 0 \lor y + \delta) $, where
$$
\delta(i) =
\begin{cases}
	y(i) - (0 \lor y)(i), & \text{ if } 0 < x(i), \\
	0, & \text{ if } x(i) \leqslant 0
\end{cases}
$$
which is an element from $ H_\a^1 $, so $ \delta_n = 0 $ for all but finitely many $ n $.
\end{proof}

\begin{lemma}\label{there_is_gap}
If $ \b < \a $, then $| 1_\a - x | \not\leqslant t 1_\a$ for any $ x \in H_\a^\b $ and any real number $ 0 \leqslant t < 1 $.
\end{lemma}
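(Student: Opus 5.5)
Since $S_\a$ sits inside $\mathbb{R}^{I_\a}$ with the pointwise order, the inequality $|1_\a-x|\leqslant t1_\a$ just says $|1_\a(i)-x(i)|\leqslant t\,1_\a(i)$ for every $i\in I_\a$. So the plan is to exhibit, for each $x\in H_\a^\b$ with $\b<\a$, one coordinate $i\in I_\a$ with $1_\a(i)>0$ and $x(i)=0$. At that coordinate $|1_\a(i)-x(i)|=1_\a(i)>t\,1_\a(i)$ for every $0\leqslant t<1$, which is the claim.

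First, by induction on $\a$, I show that $1_\a$ is the constant function $1$ on $I_\a$. This holds for $\a=1$, where $1_1=1$. For $\a>1$ we have $1_\a=(1_{\a(n)})_n$ and $I_\a=\coprod_n I_{\a(n)}$, and each $\a(n)<\a$, so the induction hypothesis gives the result. Next, by transfinite induction on $\a$, I prove the statement: if $\b<\a$ and $x\in H_\a^\b$, then there is an $i\in I_\a$ with $x(i)=0$.

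Let $\b<\a$. Then $H_\a^\b=\bigoplus_n H_{\a(n)}^\b$, so $x=(x_n)$ with $x_n=0$ for all $n$ beyond some $N$. For any $n>N$ and any $i\in I_{\a(n)}\subseteq I_\a$ we get $x(i)=0$. Note that every $I_\g$ is nonempty, since $I_1$ is a point and $I_\g$ is a disjoint union of nonempty sets. Hence such an $i$ exists. This argument doesn't actually use the induction hypothesis. It only uses the definition of $H_\a^\b$ for $\b<\a$ as a direct sum, which holds for every $\a>1$. The case $\a=1$ cannot occur, because there is no $\b$ with $1\leqslant\b<1$.

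The only delicate point is bookkeeping. The recursive definition of $H_\a^\b$ when $\b<\a$ but $\b\geqslant\a(n)$ for some $n$ gives $H_{\a(n)}^\b=S_{\a(n)}$, which is harmless. The direct sum still forces all but finitely many components to vanish, and that finiteness is what produces the gap. If a stronger, quantitative version were needed later, I would choose $i$ in a tail block $I_{\a(n)}$, $n>N$. For the lemma as stated, though, a single zero coordinate where $1_\a=1$ is enough.
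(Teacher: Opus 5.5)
Your proposal is correct and follows the paper's proof: since $\beta<\alpha$, $x$ lies in $\bigoplus_n H^\beta_{\alpha(n)}$, so $x_n=0$ for large $n$, and on such a block the inequality reduces to $1\leqslant t$. Your extra checks that $1_\alpha$ is identically $1$ on $I_\alpha$ and that every $I_\gamma$ is nonempty only make explicit what the paper leaves implicit, and the transfinite-induction framing is unnecessary, as you yourself observe.
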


\begin{proof}
We have $ x \in \bigoplus H_{\a(n)}^\b $ since $ \b < \a $, and thus $ x_n = 0 $ for all but finitely many $ n $. 
Then if $ | 1_\a - x | \leqslant t 1_\a $ for sufficiently large $ n $ we have $ 1_{\a(n)} \leqslant t 1_{\a(n)} $ which means $ 1 \leqslant t $ at every $ i \in I_\a $.
\end{proof}

\begin{lemma}\label{ru_is_inf}
If $J$  is an order ideal of a vector lattice $V$, then $[x] \in N_1(V/J) 	\iff  x \in {J}_{ru}^{(1)}$.
\end{lemma}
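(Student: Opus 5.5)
Both directions come from unwinding the definitions and using the lattice structure to pull positivity back from the quotient $V/J$ to $V$. Recall that $N_1(V/J)=N(V/J)$ is the set of infinitesimals of $(V/J,[V_+])$. Also, $J^{(1)}_{ru}$ is the set of $x\in V$ with $J\ni x_n\convr x$ for some sequence $(x_n)$ in $J$. Passing to a subsequence, this means there are $w\in V_+$ and $x_n\in J$ with $\pm n(x_n-x)\leqslant w$ for all $n$.

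($\Longleftarrow$) Let $x\in J^{(1)}_{ru}$, with $w$ and $(x_n)$ as above. The quotient map $V\to V/J$ is positive, so applying it gives
$$
\pm n([x_n]-[x])\leqslant[w]\quad (n\in\mathbb{N}).
$$
Since $[x_n]=[0]$, this reads $\pm n[x]\leqslant[w]$ with $[w]\in[V_+]$. Hence $[x]\in N(V/J)=N_1(V/J)$. This direction is immediate.

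($\Longrightarrow$) Let $[x]\in N_1(V/J)$, so there is $u\in V_+$ with $\pm n[x]\leqslant[u]$ for all $n$. By the definition of the quotient pre-order, for each $n$ there are $a_n,b_n\in J$ with
$$
u-nx+a_n\geqslant0 \quad\text{and}\quad u+nx+b_n\geqslant 0 .
$$
The goal is to find $x_n\in J$ with $\pm n(x-x_n)\leqslant w$ for one fixed $w\in V_+$. The natural candidate is to let $x_n$ be a truncation of $x$ with respect to $J$. In a vector lattice one can replace $a_n,b_n$ by $|a_n|,|b_n|$, provided $J$ is a lattice ideal (solid). I would first check whether the paper's ``order ideal'' in the vector lattice setting already gives solidity. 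If not, I would note that in the intended application $J=H_\a^{<\b}$ is a sum of direct sums of the $H$'s, hence solid. Put $c_n=|a_n|+|b_n|\in J_+$, so that $|nx|\leqslant u+c_n$.

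Then I define $x_n$ as the part of $x$ controlled by $c_n$:
$$
x_n:=\bigl(x\wedge \tfrac1n c_n\bigr)\vee\bigl(-\tfrac1n c_n\bigr).
$$
We have $|x_n|\leqslant\tfrac1n c_n$, so $x_n\in J$ by solidity. The remaining estimate is $|x-x_n|\leqslant(|x|-\tfrac1n c_n)^+$. From $n|x|\leqslant u+c_n$ we get $(n|x|-c_n)^+\leqslant u$. This gives $n|x-x_n|\leqslant u$ for all $n$, so $x_n\convr x(u)$ and $x\in J^{(1)}_{ru}$.

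The main obstacle is this truncation step. It needs the lattice identity $|x-(x\wedge c)\vee(-c)|=(|x|-c)^+$ for $c\geqslant0$, which I would verify via Riesz decomposition or componentwise in the function-space representation $\mathbb{R}^{I_\a}$ available here. It also needs solidity of $J$ so that $x_n\in J$.
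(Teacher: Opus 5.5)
Your argument is correct for lattice ideals, and it is more explicit than the paper's proof, which only says the lemma ``follows immediately from Proposition~3.1''. That citation, together with Assertion~3.1, gives the easy direction (your ($\Longleftarrow$), by positivity of the quotient map). It also identifies $N_1(V/J)$ with $\{[0]\}^{(1)}_{ru}$ computed inside $V/J$. What it does not explain is the lifting step: turning the two one-sided estimates $nx\leqslant u+a_n$ and $-nx\leqslant u+b_n$, with \emph{different} $a_n,b_n\in J$, into a two-sided estimate $\pm n(x-x_n)\leqslant w$ with a single $x_n\in J$. That lifting is exactly your truncation step, and it is where solidity enters.

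An equivalent route is the Riesz decomposition property: write $nx=z_1+z_2$ with $|z_1|\leqslant c_n$ and $|z_2|\leqslant u$, and put $x_n=z_1/n$. Also, the identity $|x-(x\wedge c)\vee(-c)|=(|x|-c)^+$ for $c\geqslant 0$ holds in every vector lattice, so you do not need a function-space representation.

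On the point you left open: in this paper an order ideal is only an order-convex subspace (Definition~1.5; lattice ideals are defined separately). At that generality the lemma is false, so your extra hypothesis is necessary. Here is a counterexample.

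Take $V=c_{00}(\mathbb{Z})$ with the coordinatewise order. Let $q:V\to\mathbb{R}^2$ be linear with $q(e_0)=(0,1)$ and $q(e_k)=(1,k)$ for $k\neq 0$, and set $J=\ker q$.

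\emph{$J$ is an order ideal but not a lattice ideal.} If $z\geqslant 0$ and $\sum_{k\neq0}z_k=0$, then $z=z_0e_0$. Hence $q(V_+)$ lies in the lexicographic cone $\{(s,t):s>0\}\cup\{(0,t):t\geqslant 0\}$. So if $a\leqslant z\leqslant b$ with $a,b\in J$, then $\pm q(z)$ both lie in that cone, which forces $q(z)=0$. However, $e_0+e_1-e_2\in J$ while $e_0+e_1+e_2\notin J$.

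\emph{$[e_0]\in N_1(V/J)$.} With $u=e_1+e_{-1}$,
\[
u-ne_0+(e_{-1-n}-e_{-1}+ne_0)=e_1+e_{-1-n}\geqslant 0,\qquad u+ne_0+(e_{1+n}-e_1-ne_0)=e_{-1}+e_{1+n}\geqslant 0,
\]
and both brackets lie in $J$. So $\pm n[e_0]\leqslant[u]$ for all $n$.

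\emph{$e_0\notin J^{(1)}_{ru}$.} Suppose a sequence $(x_k)$ in $J$ ru-converges to $x$ with regulator $w$. Then eventually every $x_k$ is supported in the finite set $F=\mathrm{supp}(w)\cup\mathrm{supp}(x)$, and $x_k\to x$ coordinatewise. Since $J\cap\mathbb{R}^F$ is a closed subspace of $\mathbb{R}^F$, we get $x\in J$. Hence $J^{(1)}_{ru}=J$, which does not contain $e_0$.

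So you have proved the correct form of the lemma, and this is all Theorem~4.2 needs. There the lemma is applied in $V_\alpha$ to $J=0\times H^{<\beta}_\alpha$. This is a lattice ideal, because $|(0,y)|=(0,|y|)$ in $V_\alpha$ and each $H^\beta_\alpha$ is solid in $S_\alpha$.
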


\begin{proof}
It follows immediately from Proposition \ref{Proposition-3_1}.
\end{proof}

\begin{lemma}\label{H_is_closed}
$ (H_\a^{<\b})_{ru}^{(1)} = H_\a^\b $ for every $ 1 < \b < \w_1 $ and $ 1 \leqslant \a < \w_1 $.
\end{lemma}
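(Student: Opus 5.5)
The plan is a transfinite induction on $\a$, with $\b$ fixed. At each stage I separate the case $\b\geqslant\a$ from the case $\b<\a$. Throughout, $\convr$ is taken in $S_\a$ with its pointwise order. I use two facts repeatedly. First, each coordinate projection $S_\a\to S_{\a(n)}$ is positive, hence preserves ru-convergence (as in the proof of Lemma \ref{Lemma-2_4}~$i)$). Second, finitely many ru-convergent sequences always have a common regulator, namely the sum of their regulators.

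\emph{Case $\b\geqslant\a$.} If $\b>\a$, then $H_\a^{<\b}\supseteq H_\a^\a=S_\a$ and there is nothing to prove. If $\b=\a$, the inclusion $\subseteq$ is trivial since $H_\a^\a=S_\a$. For $\supseteq$, take $x\in S_\a$ with $x_n=p(n)1_{\a(n)}$ for $n>n_0$, and let $x^{(k)}$ be the truncation keeping the coordinates $n\leqslant k$. Since $x_n\in S_{\a(n)}=H_{\a(n)}^{\a(n)}$, we get $x^{(k)}\in\bigoplus_{n\leqslant k}H_{\a(n)}^{\g_k}\subseteq H_\a^{\g_k}$ with $\g_k=\max_{n\leqslant k}\a(n)<\a$. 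Here the inclusion $H_{\a(n)}^{\a(n)}\subseteq H_{\a(n)}^{\g_k}$ holds because $\g_k\geqslant\a(n)$. So $x^{(k)}\in H_\a^{<\a}$. As regulator I take $w=q\cdot 1_\a$ with the polynomial $q(t)=t\,(p(t)^2+1)$, corrected on the finitely many coordinates $n\leqslant n_0$. For $n>k\geqslant n_0$ we have $k|p(n)|\leqslant n|p(n)|\leqslant q(n)$, which gives $\pm k(x-x^{(k)})\leqslant w$. Hence $x^{(k)}\convr x$.

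\emph{Case $\b<\a$.} Here $H_\a^\b=\bigoplus_n H_{\a(n)}^\b$ and $H_\a^{<\b}=\bigoplus_n H_{\a(n)}^{<\b}$. For $\supseteq$, an element of $H_\a^\b$ has finitely many nonzero coordinates $x_n\in H_{\a(n)}^\b$. Since $\a(n)<\a$, the induction hypothesis gives sequences in $H_{\a(n)}^{<\b}$ ru-converging to each such $x_n$. I assemble these finitely many sequences coordinatewise with a common regulator; the result lies in $\bigoplus_n H^{<\b}_{\a(n)}$. For $\subseteq$, let $H_\a^{<\b}\ni y_k\convr x$ with regulator $w$, where $w_n=q(n)1_{\a(n)}$ for large $n$. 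Projecting to each coordinate and using the induction hypothesis gives $x_n\in H_{\a(n)}^\b$ for every $n$. It then remains to show that $x_n=0$ for all but finitely many $n$, that is, $p(x)=0$. If infinitely many $\a(n)$ exceed $\b$ (always the case when $\a$ is a limit, since $\a(n)\to\a$), this follows from Lemma \ref{there_is_gap}. For such $n$, $H_{\a(n)}^\b$ lies in $\bigoplus$, so $x_n=p(n)1_{\a(n)}\in H_{\a(n)}^\b$ forces $p(n)=0$. Thus the polynomial $p$ has infinitely many roots and vanishes.

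The main obstacle is the remaining successor case $\a=\b+1$. Then $\a(n)=\b$ for all $n$, $H_{\a(n)}^\b=S_\b$ contains $1_\b$, and the coordinatewise argument gives no information. The estimate $\pm k(x-y_k)\leqslant w$ only yields $k|p(n)|\leqslant q(n)$ beyond the support of $y_k$, which does not exclude $p\neq0$ when $\deg q>\deg p$. Here I would try to use that every $y_k$ lies in a single $H_\a^{\g_k}$ with $\g_k<\b$. Then $y_{k,n}\in H_\b^{\g_k}$, and on the coordinates $n$ in the support of $y_k$, Lemma \ref{there_is_gap} bounds $|p(n)1_\b-y_{k,n}|$ from below by a fixed fraction of $|p(n)|1_\b$. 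The hope is to play this lower bound against the upper bound $q(n)/k$ along a diagonal sequence of coordinates. If this fails, I would re-examine in which ambient space (in $S_\a$, or in $V_\a$ with the lexicographic order) the closure is meant, since the claim in this case hinges on that choice.
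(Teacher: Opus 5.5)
You handle the cases $\beta\geqslant\alpha$, the inclusion $\supseteq$ for $\beta<\alpha$, and the inclusion $\subseteq$ when infinitely many $\alpha(n)$ exceed $\beta$ correctly. In the last of these you get $p(x)=0$ from the induction hypothesis $x_n\in H^\beta_{\alpha(n)}$ plus finite support, whereas the paper gets it directly from the approximants. However, the inclusion $\subseteq$ for $\alpha=\beta+1$ is not proved. You only state a hope for a diagonal argument and suggest the claim might depend on the ambient space. It does not. This is the essential case: for instance, it is what keeps $1_3$ out of $(c_{00}(I_3))^{(1)}_{ru}$ in $S_3$. It closes with an observation you were close to.

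The missing idea is to fix one coordinate $n$ and let $k\to\infty$, instead of letting $n$ move with $k$. After passing to a subsequence, $\pm k(y_k-x)\leqslant w$ for all $k$. Fix $n$ so large that $p(n)\neq 0$, $x_n=p(n)1_\beta$ and $w_n=q(n)1_\beta$. For every $k$ there is $\gamma_k<\beta$ with $y_k\in H^{\gamma_k}_\alpha$. Hence $(y_k)_n\in H^{\gamma_k}_\beta=\bigoplus_m H^{\gamma_k}_{\beta(m)}$ is finitely supported in $S_\beta$, while $p(n)1_\beta$ is not. Read the inequality $k\,|x_n-(y_k)_n|\leqslant q(n)1_\beta$ at a sub-coordinate $m$ outside the support of $(y_k)_n$. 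This gives $k|p(n)|\leqslant q(n)$ for every $k$ with $n$ fixed, which is impossible.

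This is exactly Lemma \ref{there_is_gap} applied inside $S_{\alpha(n)}$ rather than in $S_\alpha$, to $(y_k)_n/p(n)$ with $t=q(n)/(k|p(n)|)<1$. The obstruction holds at every coordinate $n$, not only on the support of $y_k$, so no diagonal sequence is needed. Hence $p(x)=0$ and $x\in\bigoplus_n S_\beta=H^\beta_{\beta+1}$.

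This is the paper's argument. Since it only needs $\alpha(n)\geqslant\beta$ for large $n$, it treats all $\alpha>\beta$ uniformly, so your split according to whether infinitely many $\alpha(n)$ exceed $\beta$ is unnecessary.
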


\begin{proof}
We prove this by induction on $ \alpha $.

\medskip\noindent
\underline{Case $ \a < \b $:}\\
It is obviously true since there is $ \a \leqslant \g < \b $, and thus $ H_\a^\g = H_\a^\b = S_\a $.

\medskip\noindent
\underline{Case $ \a = \b $:}\\
Clearly, $ H_\a^\b = S_\a $, so $ \subseteq $ is trivial.
On the other hand, $ \bigoplus S_{\a(n)} \subseteq H_\a^{<\a} $. Indeed, if we take $ x \in S_\a $ such that $ x_n = 0 $ for all $ n > m $, then, after noticing that
$$
	H_{\a(n)}^{\a(m)} = S_{\a(n)} \quad\text{ for every } n \leqslant m,
$$
we may conclude that $ x \in \bigoplus H_{\a(n)}^{\a(m)} = H_\a^{\a(m)} $.
Now, if $ x \in S_\a $ and $ x_{(m)} $ denotes its initial segment of length $ m $, then $ x_{(m)}\overset{ru}{\rightarrow} x (w) $ 
where the witness $ w $ is given by any positive polynomial growing faster than $ p(x) $, for example, as $ w = (1+t^{\deg p(x) + 1})1_\a $.

\medskip\noindent
\underline{Case $ \a > \b $:}\\
Let's prove $ \supseteq $ first.
For any element $ x \in H_\a^\b $, we know that $ p(x) = 0 $ and $ x_n \in H_{\a(n)}^\b $ for all $ n $, so let $ m $ be any number such that $ x_n = 0 $ for all $ n > m $.
For every $ x_n $ we have sequences of elements $ y_{n, k} \in H_{\a(n)}^{<\b} $ and witnesses 
$ w_n \in S_{\a(n)} $ such that $ y_{n, k}  \overset{ru}{\rightarrow} x_n (w_n) $.
Choose $ \g_k < \b $ such that $ y_{n, k} \in H_{\a(n)}^{\g_k} $ for all $ n < m $, and construct $ y_k \in H_\a^{\g_k} $
$$
	(y_k)_n = 
	\begin{cases}
		y_{n, k}, & \text{ if } n < m, \\
		0, & \text{ if } n \geqslant m,
	\end{cases}
$$
as well as $ w \in S_\a $ from $ w_n $, similarly.
It is easy to see that $ y_k  \overset{ru}{\rightarrow} x (w) $.
\\
Now in the $ \subseteq $ direction.
Let's take an arbitrary sequence $ y_k \in H_\a^{<\b} $ and $ x, w \in S_\a $ such that $ y_k  \overset{ru}{\rightarrow} x (w) $.
Suppose that $ p(x) = p \neq 0 $. Then, for any sufficiently large $ n $, we have $ p(n) \neq 0 $, $ \a(n) \geqslant \b $, $ x_n = p(n) 1_{\a(n)} $, 
and $ w_n = q(n) 1_{\a(n)} $. The ru-convergence gives
$$
	k \ |p(n) 1_{\a(n)} - (y_k)_n | \leqslant q(n) 1_{\a(n)},
$$
and thus
$$
	| 1_{\a(n)} - \frac{1}{p(n)}(y_k)_n | \leqslant \frac{q(n)}{k|p(n)|} 1_{\a(n)}.
$$
But $ y_k \in H_{\a}^\g $ for some $ \g < \b $, which means $ (y_k)_n \in H_{\a(n)}^\g $, which, for large enough $ k $, such that $  \frac{q(n)}{k|p(n)|} < 1 $ 
contradicts Lemma \ref{there_is_gap}.
Hence, $ p(x) = 0 $. On the other hand, $ y_{n, k}  \overset{ru}{\rightarrow} x_n (w_n) $, so, by induction, $ x_n \in H_{\a(n)}^\b $, 
which means that $ x \in \bigoplus H_{\a(n)}^\b = H_\a^\b $.
\end{proof}

\begin{theorem}\label{Theorem-4_2}
$\lambda_{V_\a} = \a $ for any $ 1 \leqslant \alpha < \w_1 $.
\end{theorem}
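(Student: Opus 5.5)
The plan is to compute the chain of infinitesimal ideals explicitly: I will show by transfinite induction on $\b$ that
$$
N_\b(V_\a)=H_\a^1\times H_\a^{\b}\qquad (1\leqslant\b\leqslant\a),
$$
with $N_0(V_\a)=\{0\}$. Since $H_\a^\b=S_\a$ for $\b\geqslant\a$, this gives $N_\a(V_\a)=N_{\a+1}(V_\a)=V_\a$. The inclusions $H_\a^\g\subsetneq H_\a^\b$ for $\g<\b\leqslant\a$ give strict growth below $\a$. Together these yield $\lambda_{V_\a}=\a$.

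\textbf{The base step.} Here $N_1(V_\a)=N_{V_\a}$. I would first check that $N_{V_\a}\supseteq H_\a^1\times 0$. For $x\in H_\a^1=c_{00}(I_\a)$, take $\xi=(|x|,0)$. Then $\pm n(x,0)+\xi$ has first coordinate $\geqslant 0$ at every $i$, and it equals $0$ exactly where $x(i)=0$; there the second coordinate is $0$ as well. So $\pm n(x,0)\leqslant\xi$ for all $n$.

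Next I would check $0\times H_\a^1\subseteq N_{V_\a}$ via the lexicographic order. For $y\in c_{00}$, take $\xi=(\chi_{\mathrm{supp}\,y},0)$; the first coordinate is strictly positive on the support of $y$.

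Conversely, let $(x,y)\in N_{V_\a}$ with $\pm n(x,y)\leqslant\xi=(\xi^1,\xi^2)$. Wherever $\xi^1(i)=0$, the bound forces $x(i)=0$ and $\pm ny(i)\leqslant\xi^2(i)$ for all $n$, hence $y(i)=0$. Since $\xi^1\in c_{00}$, this gives $y\in c_{00}=H_\a^1$.

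\textbf{The inductive step.} The first coordinate stays inside every ideal, so I would reduce to the second factor. I expect $V_\a\big/\bigcup_{\g<\b}N_\g$ to be order isomorphic to $S_\a/H_\a^{<\b}$, because the lexicographic part is killed already at stage $1$. To verify this I need the quotient order to coincide with the one induced by the $S_\a$-order on $0\times S_\a$. The needed facts are that $0\leqslant(x,y)$ modulo $H^1\times H^{<\b}$ can be witnessed with $x=0$, and that any $(x,y)\geqslant 0$ has $y$ positive outside the finite support of $x$, which can be corrected by an element of $H^1\subseteq H^{<\b}$.

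Then Lemma \ref{ru_is_inf} gives that $[y]\in N_1(S_\a/H_\a^{<\b})$ if and only if $y\in (H_\a^{<\b})^{(1)}_{ru}$. Lemma \ref{H_is_closed} identifies the latter with $H_\a^\b$. This works uniformly for successor and limit $\b$, since the union $\bigcup_{\g<\b}N_\g$ equals $H^1\times H^{<\b}$ by the induction hypothesis.

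\textbf{Strictness and the main obstacle.} To get $N_\g\neq N_\b$ for $\g<\b\leqslant\a$, I would exhibit an element of $H_\a^\b\setminus H_\a^\g$. Recursively, I would descend to a coordinate $n$ with $\a(n)\geqslant\b$ and take $1_{\b'}$ at the appropriate level. Lemma \ref{there_is_gap} shows that elements like $1_\b$ are not in lower $H$'s.

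The main obstacle is the quotient-order identification in the inductive step: that moving to the quotient by $H^1\times H^{<\b}$ erases the lexicographic component. Two points need care there. The order ideal of the sequential quotient must coincide with the quotient by the union. And Lemma \ref{ru_is_inf}, stated for order ideals $J$ of a vector lattice, must be applied to $J=H_\a^{<\b}$; this requires checking that $H_\a^{<\b}$ is an order ideal, which follows inductively from its direct-sum structure.
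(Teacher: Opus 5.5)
\textbf{The base step is false, and the error propagates.} The inclusion $H_\a^1\times 0\subseteq N_{V_\a}$ does not hold. With $\xi=(|x|,0)$, take a point $i$ with $x(i)>0$. The first coordinate of $\xi-n(x,0)$ there is $(1-n)x(i)<0$ for $n\geqslant 2$.

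In fact no $\xi$ works. The lexicographic order compares first coordinates first, so $\pm n(x,y)\leqslant\xi$ for all $n$ forces $\xi^1(i)\mp n x(i)\geqslant 0$ for all $n$. Hence $x(i)=0$ for every $i$. So $N_1(V_\a)=0\times H_\a^1$, which is the paper's formula. Already for $\a=1$ you have $V_1=\mathbb{R}^2_{lex}$, and $(1,0)$ is not infinitesimal there.

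Consequently your chain $N_\b(V_\a)=H_\a^1\times H_\a^\b$ is wrong. In particular $N_\a(V_\a)=0\times S_\a\neq V_\a$, so your stabilization argument ``$N_\a(V_\a)=V_\a$'' fails. What is actually needed, and what the paper uses, is the following. The quotient $V_\a/(0\times S_\a)$ is $c_{00}(I_\a)$ with the pointwise order, which is Archimedean. Hence $N_{\a+1}(V_\a)=N_\a(V_\a)$.

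\textbf{The inductive step must be corrected the same way: the first factor is never killed.} The correct statement is this. For any subspace $H_\a^1\subseteq J\subseteq S_\a$, the quotient $V_\a/(0\times J)$ is order isomorphic to $c_{00}(I_\a)\times(S_\a/J)$ with the product order. It is not isomorphic to $S_\a/J$ alone. Your observation that finitely many coordinates of $y$ can be corrected by elements of $c_{00}(I_\a)\subseteq J$ is exactly what decouples the lexicographic order into a product.

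The infinitesimals of this product are $0\times N(S_\a/J)$, because $c_{00}(I_\a)$ with the pointwise order is Archimedean. From there, apply Lemmas \ref{ru_is_inf} and \ref{H_is_closed} with $J=H_\a^{<\b}$, as you planned. This gives $N_\b(V_\a)=0\times H_\a^\b$.

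With these corrections your argument becomes essentially the paper's. Your strictness argument, via an element of $H_\a^\b\setminus H_\a^\g$ for $\g<\b\leqslant\a$, then goes through unchanged.
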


\begin{proof}
Explicitly writing $ N_\b (V_\a) $ for any $ 1 \leqslant \b \leqslant \a $ we get
$$
	N_\b(V_\a) = 0 \times H_\a^\b.
$$
Indeed, $ 0 \times H_\a^1 \subseteq N_1(V_\a) $, 
because of the lexicographic order, and $ N_1(V_\a) \subseteq 0 \times c_{00}(I_\a)$, 
since $ S_\a $ is Archimedean and only a finite number of coordinates of the second component may be omitted from the comparison via the first component.

\medskip
Clearly for any $ H_\a^1 \subseteq J \subseteq H_\a^\a $ we have $ (0 \times J)_{ru}^{(1)} = 0 \times J_{ru}^{(1)} $. Hence, by induction, from Lemmas \ref{ru_is_inf} and \ref{H_is_closed} we have the desired formula for $ N_\b(V_\a) $.

\medskip
Now $ V_\a / N_\a(V_\a) $ is just $ H_\a^1 \times 0 $, which is Archimedean.
\end{proof}

\noindent
Again, take any colimit of $ V_\a $ with the colimit ordering, for example $V_{\w_1} = \bigoplus_{1 \leqslant \a < \w_1} V_\a$.

\begin{corollary}\label{Corollary-4_2}
$\lambda_{V_{\w_1}} = \w_1 $.
\end{corollary}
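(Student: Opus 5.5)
The plan is to compute the chain $N_\b(V_{\w_1})$ explicitly, using Theorem \ref{Theorem-4_2} and the fact that $V_{\w_1}=\bigoplus_{1\leqslant\a<\w_1}V_\a$ carries the coordinatewise (direct sum) order. The first step is the identity
$$
   N_\b\Bigl(\bigoplus_{\a}V_\a\Bigr)=\bigoplus_{\a}N_\b(V_\a)
$$
for every ordinal $\b$. I will prove it by transfinite induction on $\b$. The inclusion $\supseteq$ follows from Lemma \ref{Lemma-6} applied to the positive embeddings $V_\a\to V_{\w_1}$, together with the fact that the $N_\b$ are subspaces. For $\subseteq$, I use that the coordinate projections $V_{\w_1}\to V_\a$ are positive, and again apply Lemma \ref{Lemma-6}. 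Since every element of the direct sum has finite support, it lies in the sum of its projections, and these projections lie in $N_\b(V_\a)$. Limit stages are just unions, and a union commutes with the direct sum because supports are finite.

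By the proof of Theorem \ref{Theorem-4_2}, $N_\b(V_\a)=0\times H_\a^{\min(\b,\a)}$, and this chain stabilizes exactly at $\a$. I also need $N_\b(V_\a)=N_\a(V_\a)$ for all $\b\geqslant\a$. This holds because $V_\a/N_\a(V_\a)$ is Archimedean, hence almost Archimedean, so no further infinitesimals appear.

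Next, $\lambda_{V_{\w_1}}\geqslant\w_1$. Take any $\b<\w_1$. The component $V_{\b+1}$ satisfies $N_{\b+1}(V_{\b+1})\neq N_\b(V_{\b+1})$, since $\lambda_{V_{\b+1}}=\b+1$. By the direct sum identity, this gives $N_{\b+1}(V_{\w_1})\neq N_\b(V_{\w_1})$. So the chain does not stabilize before $\w_1$.

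For $\lambda_{V_{\w_1}}\leqslant\w_1$, I argue as follows. We have $N_{\w_1}(V_{\w_1})=\bigoplus_\a N_{\w_1}(V_\a)=\bigoplus_\a N_\a(V_\a)$, and likewise $N_{\w_1+1}(V_{\w_1})=\bigoplus_\a N_\a(V_\a)$. Hence the two are equal. (Alternatively, this bound is the general fact $\lambda_X\leqslant\w_1$ noted earlier in the paper.)

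The main obstacle is the $\subseteq$ direction of the direct sum identity at successor stages. Lemma \ref{Lemma-6} is stated for positive operators between arbitrary POVSs, so it applies to each projection. The remaining point is that an element is determined by its finitely many projections, so it lies in $\bigoplus_\a N_\b(V_\a)$ as soon as each projection lies in $N_\b(V_\a)$. This is immediate because each $N_\b(V_\a)$ is a subspace. If the intended colimit ordering is instead the one on the increasing union of the $V_\a$, the same argument goes through with inclusion maps in place of projections, using compatibility of the chains $N_\b$.
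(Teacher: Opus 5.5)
Your proposal is correct and follows essentially the route the paper has in mind. The paper gives no separate proof and treats the corollary as immediate from Theorem~\ref{Theorem-4_2}. Your componentwise identity $N_\beta\bigl(\bigoplus_\alpha V_\alpha\bigr)=\bigoplus_\alpha N_\beta(V_\alpha)$, obtained from Lemma~\ref{Lemma-6} applied to the positive embeddings and coordinate projections, combined with $\lambda_{V_{\beta+1}}=\beta+1$ for the lower bound and componentwise stabilization (or the general bound $\lambda_X\leqslant\omega_1$) for the upper bound, is exactly what makes it immediate.

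One small correction: under the paper's definition, $N_\lambda$ at a limit $\lambda$ is the preimage of the infinitesimals modulo $\bigcup_{\gamma<\lambda}N_\gamma$, not the union itself, and Theorem~\ref{Theorem-4_2} depends on this at limit $\alpha$. This does not affect your argument, since Lemma~\ref{Lemma-6} gives both inclusions for every ordinal $\beta$ directly, so the transfinite induction is unnecessary.
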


\section{The case of pre-ordered Abelian groups}

\hspace{4mm}
In this section, we extend some of results obtained in previous sections to the setting of pre-ordered Abelian groups.
First, we adopt necessary definitions to pre-ordered groups. In what follows we use additive notations denoting by "+"
the group operation and by 0 the unit of the group.

A subset $W$ of a group $X$ is a {\color{blue}{wedge}} whenever $0\in W$ and $W+W\subseteq W$. 
A wedge $C$ is called a {\color{blue}{cone}}, if $C\cap-C=\{0\}$. 
An {\color{blue}{pre-ordered group}} (briefly, {\color{blue}{POG}}) is a group together with a wedge. 
It is convenient to denote the wedge $W$ in a POG (X,W) by $X_+$.
When $X_+$ is a cone, $X$ is called an {\color{blue}{ordered group}} (briefly, {\color{blue}{OG}}).

Every POG $X$ is equipped with a {\color{blue}{partial pre-order}} $x\leqslant y \iff y-x\in X_+$. 
Each pair of vectors $\{a,b\}$ in a POG $X$ produces an {\color{blue}{order interval}}:
$$
   [a,b]:=\{x\in X: a\leqslant x\leqslant b\}=(a+X_+)\cap(b-X_+).
$$
An element $u$ of a POG $X$ is an {\color{blue}{order unit}}, whenever $X=\bigcup\limits_{n\in\mathbb{N}}[-nu,nu]$.
A subset $I$ of a POG $X$ is an {\color{blue}{order ideal}} in $X$, whenever $I$ is a subgroup of $X$ such that $[a,b]\subseteq I$ for every $a,b\in I$.
A group homomorphism $T: X\to Y$ between POGs $X$ and $Y$ is {\color{blue}{positive}} ({\color{blue}{order bounded}}) if $T$ takes $X_+$ into $Y_+$
($T$ takes order intervals of $X$ into order intervals of $Y$).

\begin{definition}\label{Definition-5_1}
A POG $X$ is:
\begin{enumerate}[-]
\item\
{\color{blue}{almost Archimedean}} if $y=0$, whenever $\pm ny\leqslant u\in X_+$ for all $n\in\mathbb{N}$.
\item\
{\color{blue}{Archimedean}} if $y\leqslant 0$, whenever $ny\leqslant u\in X_+$ for all $n\in\mathbb{N}$.
\end{enumerate}
\end{definition}

\noindent
It follows from Definition \ref{Definition-5_1}: every almost Archimedean wedge is a cone, every subcone of an almost Archimedean cone is almost Archimedean, and
every Archimedean cone is almost Archime\-dean. The wedge $X_+=X$ is Archimedean yet not almost Archimedean unless $X=\{0\}$.

\begin{definition}\label{Definition-5_2}
A net $(x_{\a})$ in a POG $X$ {\color{blue}{ru-converges}} to $x\in X$ {\em (}shortly, $x_{\a}\convr x${\em )} 
if, for some $u\in X_+$, there exists a sequence $({\a}_n)$ of indices with $\pm n(x_{\a}-x)\leqslant u$ for ${\a}\geqslant{\a}_n$.
\end{definition}

\noindent
It is worth noting that (in compare with Definition \ref{Definition-1_8}) one can only conclude that $2u\in X_+$ from $\pm n(x_{\a}-x)\leqslant u$,
and therefore we suppose $u\in X_+$ in Definition \ref{Definition-5_2}. Whenever we specify a regulator $u$ of the ru-convergence, we write $x_{\a}\convr x(u)$. 

\begin{lemma}\label{Lemma-5_1}
Let $X$ be an POG. The following assertions hold.
\begin{enumerate}[$i)$]
\item\
If every ru-convergent net in $X$ has a unique ru-limit then $X$ is almost Archime\-dean.
\item\
If $X$ is Abelian and almost Archime\-dean then every ru-convergent net in $X$ has a unique ru-limit.
\end{enumerate}
\end{lemma}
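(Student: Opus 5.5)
For $i)$ I argue by contraposition. Suppose $X$ is not almost Archimedean. Then there are $y\neq 0$ and $u\in X_+$ with $\pm ny\leqslant u$ for all $n\in\mathbb{N}$. Take the constant sequence $x_k:=0$. It ru-converges to $0$ with regulator $u$, since $\pm n(0-0)=0\leqslant u$ because $u\in X_+$. It also ru-converges to $y$ with the same regulator, because $\pm n(x_k-y)=\mp ny\leqslant u$ for all $n$ and all $k$. So this sequence has two distinct ru-limits $0\neq y$, and ru-limits are not unique. Here Definition~\ref{Definition-5_2} requires the regulator to lie in $X_+$; this holds for $u$ by assumption, so there is nothing more to check.

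For $ii)$, let $X$ be Abelian and almost Archimedean, and suppose $x_\a\convr x(u)$ and $x_\a\convr y(v)$ with $u,v\in X_+$.

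\begin{enumerate}[1.]
\item For each $n$ choose indices $\a_n$ and $\a_n'$ witnessing the two convergences at level $n$.
\item Use directedness to pick one index $\a\geqslant\a_n,\a_n'$, so that
$$\pm n(x_\a-x)\leqslant u\quad\text{and}\quad \pm n(x_\a-y)\leqslant v.$$
\item Add the inequality $n(x_\a-x)\leqslant u$ to $-n(x_\a-y)\leqslant v$. Adding inequalities is legitimate because $X_+$ is a wedge, so $X_+ + X_+\subseteq X_+$.
\item Use commutativity to rewrite $n(x_\a-x)-n(x_\a-y)=n(y-x)$. Commutativity is exactly what lets the multiples $n(\cdot)$ distribute over differences and the terms $nx_\a$ cancel.
\item Conclude $n(y-x)\leqslant u+v$. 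Adding the other pair symmetrically gives $-n(y-x)\leqslant u+v$.
\item Since $u+v\in X_+$ and $\pm n(y-x)\leqslant u+v$ for all $n\in\mathbb{N}$, almost Archimedeanity gives $y-x=0$.
\end{enumerate}

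The only delicate point is step 4. In a non-Abelian group, $n(a-b)$ does not generally equal $na-nb$, and order relations need not be translation-compatible on both sides. So I would write out explicitly that $X$ is Abelian and that $x\leqslant y\iff y-x\in X_+$ is invariant under translation. The rest is routine.
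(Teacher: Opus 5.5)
Your proof is correct and follows the paper's argument. In $i)$ you exhibit a constant sequence with two ru-limits (the paper uses the constant $y$ rather than $0$, which is immaterial). In $ii)$ you add the two estimates at a common index and then use commutativity and almost Archimedeanity exactly as the paper does, the only difference being that you allow distinct regulators $u,v$ where the paper assumes a common one.
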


\begin{proof}
$i)$\ 
Otherwise, $\pm ny\leqslant u$ for some $y\ne 0$, $u\in X_+$, and all $n\in\mathbb{N}$.
It follows that $\pm n(y-0)\leqslant u$ and $\pm n(y-y)\leqslant u$ for all $n\in\mathbb{N}$. Thus, $y\convr 0(u)$ and $y\convr y(u)$, and hence 
every constant net with all terms equal to $y$ has two different ru-limits, a contradiction.

\medskip
$ii)$\ 
Let $X$ be Abelian. In view of $i)$, we have to show that every ru-convergent net in $X$ has a unique ru-limit
whenever $X$ is almost Archime\-dean. On a way to contradiction, suppose $X$ is almost Archime\-dean, $x_{\a}\convr x_1(u)$, $x_{\a}\convr x_2(u)$, and $x_2\ne x_1$.
Then, there exist two sequences $({\a}'_n)$ and $({\a}''_n)$ of indices with $\pm n(x_{\a}-x_1)\leqslant u$ 
and $\pm n(x_2-x_{\a})\leqslant u$ for ${\a}\geqslant{\a}'_n$ and ${\a}\geqslant{\a}''_n$. For each $n\in\mathbb{N}$ find 
${\a}'''_n\geqslant{\a}'_n,{\a}''_n$. The sequence $({\a}'''_n)$ satisfies
$\pm n((x_{\a}-x_1)+(x_2-x_{\a}))\leqslant 2u$ for ${\a}\geqslant{\a}'''_n$. Since $X$ is Abelian,
we conclude that $\pm n(x_2-x_1)\leqslant 2u$ for all $n\in\mathbb{N}$. Then $x_2=x_1$ because $X$ is almost Archime\-dean, a contradiction.
\end{proof}

\begin{definition}\label{Definition-5_3}
A subset $S$ of a POG $X$ is {\color{blue}{ru-closed}} if $S\ni x_{\a}\convr x(w)$ implies $x\in S$.
\end{definition}

\noindent
Let $X$ be a POG. We denote $N_X:=\bigl\{x\in X: (\exists\xi\in X_+)(\forall n\in\mathbb{N})\pm nx+\xi\in X_+\bigl\}$
and $D_X:=\{x\in X|(\exists\xi\in X_+)(\forall n\in\mathbb{N})\ nx+\xi\in X_+\}$.
It is straightforward to see that $X$ is almost Archimedean if and only if $N_X=\{0\}$.
Furthermore, $N_X$ is an order ideal in $X$ whenever $X$ is Abelian.  Cleary, $D_X\cap-D_X=N_X$.

\begin{proposition}\label{Proposition-5_1}
Let $X$ be a POG. Then $X$ is Archimedean if and only if $D_X=X_+$.
\end{proposition}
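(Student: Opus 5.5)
The plan is to copy the argument of Proposition \ref{Proposition-3_1}~$i)$ almost verbatim, since it uses only the additive structure and integer multiples. Both directions rest on one observation: for $x\in X$ and $\xi\in X_+$, the relation $nx+\xi\in X_+$ is the same as $n(-x)\leqslant\xi$. This holds because $\xi-n(-x)=\xi+nx$. We do not need commutativity for this rewriting, because $nx$ and $\xi$ appear in the order forced by the definition of $\leqslant$.

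For the forward direction, assume $X$ is Archimedean. The inclusion $X_+\subseteq D_X$ holds in any POG: for $x\in X_+$ take $\xi=0\in X_+$, since $nx\in X_+$ by $X_+ +X_+\subseteq X_+$. We then check that $D_X\subseteq X_+$. Let $x\in D_X$ with witness $\xi\in X_+$. By the observation, $n(-x)\leqslant\xi$ for all $n\in\mathbb{N}$. The Archimedean property from Definition \ref{Definition-5_1}, applied to $y=-x$, gives $-x\leqslant 0$, that is, $0-(-x)=x\in X_+$.

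For the converse, assume $D_X=X_+$ and suppose $ny\leqslant u\in X_+$ for all $n$. Then $u-ny\in X_+$. The step that needs care in a possibly non-Abelian group is converting this into the form $n(-y)+u\in X_+$ required by the definition of $D_X$. Since $-ny=n(-y)$, we have $u-ny=u+n(-y)$, whereas the definition of $D_X$ asks for $n(-y)+u$. These two elements are conjugate: $n(-y)+u=-u+\bigl(u+n(-y)\bigr)+u$.

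I expect this to be the only real obstacle, namely reconciling the order convention $x\leqslant y\iff y-x\in X_+$ with the placement of $\xi$ in the definition of $D_X$. The first thing I would try is to read $y-x$ as $-x+y$, the left-translation convention. Under that convention the relation $ny\leqslant u$ means $-ny+u\in X_+$, which is exactly $n(-y)+u\in X_+$, so no normality of $X_+$ is needed. If instead $y-x$ is read as $y+(-x)$, then the same convention must be used consistently in the definition of $D_X$, and the computation goes through symmetrically.

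Either way, we conclude $-y\in D_X=X_+$, hence $y\leqslant 0$, and $X$ is Archimedean.
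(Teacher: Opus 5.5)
Your proposal is correct and is essentially the paper's proof. In both directions the paper rewrites $nx+\xi\in X_+$ as $n(-x)\leqslant\xi$ and applies the Archimedean property to $-x$, or conversely rewrites $ny\leqslant\xi$ as $n(-y)+\xi\in X_+$ to get $-y\in D_X=X_+$; it uses the left-translation reading $y-x=-x+y$ without comment, and you make that reading explicit. The only blemish is notational: under that reading your first-paragraph identity should be $\xi-n(-x)=-n(-x)+\xi=nx+\xi$, not $\xi+nx$.
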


\begin{proof}
$(\Longrightarrow)$:\ 
As $X_+\subseteq D_X$, we have to show $D_X\subseteq X_+$.
Let $x\in D_X$. Then, for some $\xi\in X_+$, $\xi\geqslant -nx$ for every $n\in\mathbb{N}$.
Since $(X,X_+)$ is Archimedean, we infer $-x\leqslant 0$, or $x\in X_+$.

\smallskip
$(\Longleftarrow)$:\ 
Let $ny\leqslant\xi$ for some $\xi\in X_+$ and all $n\in\mathbb{N}$.
Then, $-ny+\xi\geqslant 0$ for all $n\in\mathbb{N}$, and hence $-y\in D_X$. Since $D_X=X_+$ then $y\leqslant 0$. Thus, $(X,X_+)$ is Archimedean.
\end{proof}

The following assertion is analogous to Assertion \ref{Assertion-3_1}.

\begin{assertion}\label{Assertion-5_1}
Let $X$ be a POG. Then $N_X=\{0\}^{(1)}_{ru}$.
\end{assertion}

\begin{proof}
Let $x\in X$. Then, $x\in N_X$ iff $\pm nx+\xi\in X_+$ for some $\xi\in X_+$ and all $n\in\mathbb{N}$ iff
$\pm n(0-x)\leqslant\xi$ for some $\xi\in X_+$ and all $n\in\mathbb{N}$ iff $x\in\{0\}^{(1)}_{ru}$.
\end{proof}

\medskip
The following definition is a POG version of Definition \ref{Definition-3_1}.

\begin{definition}\label{Definition-5_4}
Let $X$ be a POG and $\mathcal{C}^a_{Arch_G}(X)$ a category, whose objects are pairs $\left\langle Y,\phi\right\rangle$,
where $\phi:X\to Y$ is a positive group homomorphism to an almost Archimedean Abelian OG $Y$, and morphisms
$\left\langle Y_1,\phi_1\right\rangle\to\left\langle Y_2,\phi_2\right\rangle$ are positive group homomorphisms $\psi: Y_1\to Y_2$ satisfying
$\psi\circ\phi_1=\phi_2$. If $\left\langle Y_0,\phi_0\right\rangle$ is an initial object of $\mathcal{C}^a_{Arch}(X)$ 
then the OG $Y_0$ is said to be {\color{blue}{almost Archimedeanization}} of a POG $X$.
\end{definition}

Let us present a construction of almost Archimedeanization similar to what we did in Theorem \ref{Theorem-3_1}.
Consider the following sets in an Abelian POG $X$:\ $N_0(X)=\{0\}$ and, for every ordinal $\lambda>0$,  
$N_{\lambda}(X)=\bigl\{x\in X: [x]_{\bigcup_{{\g}<\lambda}N_{{\g}}(X)}\in N\bigl(X/_{\bigcup_{{\g}<\lambda}N_{{\g}}(X)}\bigl)\bigl\}$ 
(here, we use the straightforward fact that if $X$ an Abelian POG then each $N_{\lambda}(X)$ as well as $\bigcup_{{\g}<\lambda}N_{{\g}}(X)$
is an order ideal in $X$). Since $N_{\lambda_1}(X)\subseteq N_{\lambda_2}(X)$ for $\lambda_1\leqslant\lambda_2$ then 
$N_{\lambda+1}(X)=N_{\lambda}(X)$ for $\lambda\geqslant\card(X)$. Therefore, there exists
the first ordinal, denoted by $\lambda_X$, such that $N_{\lambda_X+1}(X)=N_{\lambda_X}(X)$.
We call $\lambda_X$ the {\color{blue}{almost Archimedeanization type}} of $X$.

\begin{lemma}\label{Lemma-5_2}
Let $T$ be a positive group homomorphism from an Abelian POG $(X,X_+)$ to an Abelian POG $(Y,Y_+)$. 
Then, $T(N_{\lambda}(X))\subseteq N_{\lambda}(Y)$ for every ordinal ${\g}$.
\end{lemma}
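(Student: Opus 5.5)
We prove the claim by transfinite induction on the ordinal $\lambda$, in the same way as Lemma \ref{Lemma-6} for POVSs. The only new point is that quotients are now by subgroups and all scalars are integers. To keep the induction clean we prove the stronger statement $T\bigl(\bigcup_{\g<\lambda}N_\g(X)\bigr)\subseteq\bigcup_{\g<\lambda}N_\g(Y)$ together with $T(N_\lambda(X))\subseteq N_\lambda(Y)$. The base case is $\lambda=0$: $T(N_0(X))=T(\{0\})=\{0\}=N_0(Y)$, since $T$ is a homomorphism.

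Fix $\lambda>0$ and assume the claim for all $\g<\lambda$. Write $M_X=\bigcup_{\g<\lambda}N_\g(X)$ and $M_Y=\bigcup_{\g<\lambda}N_\g(Y)$. By the induction hypothesis, $T(M_X)\subseteq M_Y$. Since $X$ and $Y$ are Abelian, $M_X$ and $M_Y$ are order ideals, as the paper notes. Hence $T$ induces a well defined homomorphism $\tilde T:X/M_X\to Y/M_Y$, $\tilde T[x]=[Tx]$. This $\tilde T$ is positive: if $[x]\geqslant 0$, then $x+a\in X_+$ for some $a\in M_X$, so $Tx+Ta\in Y_+$ with $Ta\in M_Y$, i.e.\ $[Tx]\geqslant 0$.

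Now let $x\in N_\lambda(X)$. Then $[x]\in N(X/M_X)$, so there is $\xi\in X_+$ with $\pm n[x]\leqslant[\xi]$ for all $n\in\mathbb{N}$. Applying the positive homomorphism $\tilde T$ gives $\pm n[Tx]\leqslant[T\xi]$ with $T\xi\in Y_+$, because $\tilde T(n[x])=n\tilde T[x]$ and positivity preserves the inequalities. Thus $[Tx]\in N(Y/M_Y)$, that is, $Tx\in N_\lambda(Y)$.

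With this uniform definition of $N_\lambda$ through $M$, no separate treatment of successor and limit ordinals is needed. The one step requiring care is the well-definedness of $\tilde T$, which is exactly where $T(M_X)\subseteq M_Y$ from the stronger induction hypothesis is used, together with the fact that $M_X$ and $M_Y$ are order ideals. Commutativity is used only there, to guarantee that these sets are subgroups and order ideals, so that the quotients are POGs. I expect no real obstacle beyond this bookkeeping.
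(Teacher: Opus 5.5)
Your proof is correct, and it is organized differently from the paper's in a way that matters. The paper splits the induction into two steps. In the successor step it pushes $\pm n[x]_{N_\alpha(X)}\leqslant[u]_{N_\alpha(X)}$ forward to $\pm n[Tx]_{N_\alpha(Y)}\leqslant[Tu]_{N_\alpha(Y)}$. This tacitly uses $T(N_\alpha(X))\subseteq N_\alpha(Y)$, which is exactly the well-definedness and positivity of your $\tilde T$. In the limit step it asserts that $x\in N_\gamma(X)$ forces $x\in N_\alpha(X)$ for some $\alpha<\gamma$, and then reuses the successor step. You instead run the successor computation once, for every $\lambda>0$, modulo $M_X=\bigcup_{\gamma<\lambda}N_\gamma(X)$.

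This difference is substantive. Under the paper's own definition, $N_\gamma(X)$ for a limit $\gamma$ is the set of elements that are infinitesimal \emph{modulo} $\bigcup_{\alpha<\gamma}N_\alpha(X)$. The correcting terms $a_n$ in $\pm nx+\xi+a_n\in X_+$ may come from $N_{\alpha_n}(X)$ with $\alpha_n$ cofinal in $\gamma$, so nothing places $x$ in a single earlier $N_\alpha(X)$. Consequently $N_\gamma(X)$ can be strictly larger than $\bigcup_{\alpha<\gamma}N_\alpha(X)$. The lattice example of Section 4 shows this: there $(0,1_\omega)\in N_\omega(V_\omega)$, but $(0,1_\omega)\notin N_k(V_\omega)$ for every $k<\omega$.

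So the paper's limit step only covers $x\in\bigcup_{\alpha<\gamma}N_\alpha(X)$, while your uniform step covers all of $N_\gamma(X)$. Your version is the one that actually proves the lemma at limit ordinals. Finally, your ``stronger'' hypothesis $T(M_X)\subseteq M_Y$ costs nothing, since it follows at once from $T(N_\gamma(X))\subseteq N_\gamma(Y)$ for all $\gamma<\lambda$.
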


\begin{proof}
Trivially, $T(N_{0}(X))\subseteq N_{0}(Y)$.
Let $x\in N_{{\a}+1}(X)$. Then $\pm n[x]_{N_{{\a}}(X)}\leqslant [u]_{N_{{\a}}(X)}$ for some $u\in X_+$ and all $n\in\mathbb{N}$, 
and hence $\pm n[Tx]_{N_{{\a}}(Y)}\leqslant [Tu]_{N_{{\a}}(Y)}$ for $n\in\mathbb{N}$, which implies $Tx\in N_{{\a}+1}(Y)$. 
If ${\g}$ is a limit ordinal and $x\in N_{{\g}}(X)$ then $x\in N_{{\a}}(X)$ for some ${\a}<{\g}$. 
Thus, $\pm n[x]_{N_{{\a}}(X)}\leqslant [u]_{N_{{\a}}(X)}$ for some $u\in X_+$ and all $n\in\mathbb{N}$, and hence
$\pm n[Tx]_{N_{{\a}}(Y)}\leqslant [Tu]_{N_{{\a}}(Y)}$ for all $n\in\mathbb{N}$. It follows $Tx\in N_{{\a}+1}(Y)\subseteq N_{{\g}}(Y)$.
\end{proof}

\begin{theorem}\label{Theorem-5_1}
Every Abelian POG possesses a unique, up to an order isomorphism, almost Archime\-deanization.
\end{theorem}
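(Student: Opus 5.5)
The plan is to mirror the proof of Theorem \ref{Theorem-3_1}, replacing vector-space quotients by group quotients. Let $X$ be an Abelian POG and put $I:=N_{\lambda_X}(X)$. I would first record that $I$ is an order ideal in $X$, which was noted when the $N_\lambda(X)$ were defined. Then $X/I$ is an Abelian group pre-ordered by $[X_+]$, and the quotient map $p_X:X\to X/I$ is a positive homomorphism.

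The first step is to show that $(X/I,[X_+])$ is an almost Archimedean OG.
\begin{itemize}
\item[(a)] \emph{No nonzero infinitesimals.} By the choice of $\lambda_X$ we have $N_{\lambda_X+1}(X)=N_{\lambda_X}(X)$. Unwinding the definition, $N_{\lambda_X+1}(X)$ consists of all $x$ with $[x]_I\in N(X/I)$. Hence $N(X/I)=\{[0]\}$, i.e.\ $X/I$ is almost Archimedean.
\item[(b)] \emph{$[X_+]$ is a cone.} Suppose $[x]\in[X_+]\cap-[X_+]$. Then $\pm n[x]\leqslant n[x]+n[-x]$ up to elements of $[X_+]$; more directly, $x=a+i_1$ and $-x=b+i_2$ with $a,b\in X_+$ and $i_1,i_2\in I$. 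This gives $\pm n[x]\leqslant[a+b]$ for all $n$, so $[x]\in N(X/I)=\{0\}$. Thus $[X_+]$ is a cone, as the Definition \ref{Definition-5_1} remark already indicates, and $X/I$ is an Abelian OG.
\end{itemize}

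The second step is the universal property. Let $\langle Y,\phi\rangle$ be an object of $\mathcal{C}^a_{Arch_G}(X)$. By Lemma \ref{Lemma-5_2}, $\phi(I)\subseteq N_{\lambda_X}(Y)$. Since $Y$ is almost Archimedean, $N_1(Y)=N_Y=\{0\}$, and by transfinite induction $N_\lambda(Y)=\{0\}$ for every $\lambda$: at each stage the quotient by $\{0\}$ is $Y$ itself. Hence $\phi$ vanishes on $I$, and $\tilde\phi([x]):=\phi(x)$ is a well-defined group homomorphism. It is positive because $\phi(X_++I)=\phi(X_+)\subseteq Y_+$, and it satisfies $\tilde\phi\circ p_X=\phi$. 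It is unique, since $p_X$ is surjective. So $\langle X/I,p_X\rangle$ is initial.

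Uniqueness up to order isomorphism is the standard fact that initial objects are unique up to a unique isomorphism. Given two initial objects, the two induced morphisms compose in both orders to the identity morphisms. This yields mutually inverse positive homomorphisms, i.e.\ an order isomorphism.

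The main obstacle I expect is not the universal property but the bookkeeping that the $N_\lambda(X)$ are order ideals in the group setting, so that the quotient pre-order is well behaved. Here I would rely on the paper's stated fact for Abelian POGs. The one place Abelianness is really used is checking that $N_{X/J}$ is a subgroup, which is where sums of infinitesimals must be rearranged.
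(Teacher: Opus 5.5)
Your proposal is correct and follows the paper's proof: you quotient by $I=N_{\lambda_X}(X)$, then use Lemma~\ref{Lemma-5_2} together with $N_{\lambda_X}(Y)=\{0\}$ to factor $\phi$ through $p_X$. You also supply details the paper leaves implicit, and they all check out: why $(X/I,[X_+])$ is an almost Archimedean OG, uniqueness of $\tilde\phi$ via surjectivity of $p_X$, and uniqueness of initial objects up to order isomorphism.
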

 
\begin{proof}
Let $X$ be an Abelian POG. Then, $(X/N_{\lambda_X}(X),[X_+]_{N_{\lambda_X}(X)})$ is an almost Archimedean Abelian OG. 
Denote by $p_X$ the quotient map $X\to X/N_{\lambda_X}(X)$. Clearly, $p_X$ is a positive group homomorphism.
Let $Y$ be an almost Archimedean Abelian OG and $\phi:X\to Y$ a positive group homomorphism. By Lemma \ref{Lemma-5_2}, 
$\phi(N_{\lambda_X}(X))\subseteq N_{\lambda_X}(Y)$. Since $Y$ is almost Archimedean then $N_{\lambda_X}(Y)=\{0\}$.
Therefore, $N_{\lambda_X}(X)\subseteq\ker(\phi)$. So, a mapping $\tilde{\phi}:X/N_{\lambda_X}(X)\to Y$ 
such that $\tilde{\phi}([x]_{N_{\lambda_X}})=\phi(x)$ is a well defined positive group homomorphism satisfying $\tilde{\phi}\circ p_X=\phi$. 
Thus, $\left\langle(X/N_{\lambda_X}(X),[X_+]_{N_{\lambda_X}(X)}),p_X\right\rangle$ is an initial object of $\mathcal{C}^a_{Arch_G}(X)$,
and hence the OG $(X/N_{\lambda_X}(X),[X_+]_{N_{\lambda_X}(X)})$ is an {\em almost Archimedization} of a POG $X$.
\end{proof}


\end{document}